\title{Combinatorial proof of the transcendence of $L(1,\chi_s)/\Pi$}
\author{YINING HU \\
CNRS, Institut de Math\'ematiques de Jussieu-PRG \\
Universit\'e Pierre et Marie Curie, Case 247 \\
4 Place Jussieu \\
F-75252 Paris Cedex 05 (France) \\
{\tt yining.hu@imj-prg.fr}}
\date{}
\begin{document}

\maketitle

\newtheorem{thm}{Theorem}
\newtheorem{lem}{Lemma}
\newtheorem{prop}{Proposition}
\newtheorem{coro}{Corollary}
\theoremstyle{definition}
\newtheorem{defi}{Definition}

\begin{abstract}
We give a combinatorial proof of the transcendence of $L(1,\chi_s)/\Pi$, where $L(1,\chi_s)$ (resp. $\Pi$) is the analogue in characteristic
$p$ of the function $L$ of Dirichlet (resp. $\pi$). This result has been proven by G. Damamme using the criteria of de Mathan.
Our proof is based on the Theorem of Christol and another property of $k$-automatic sequences.
\end{abstract}

\section{Introduction}

$$[k]=T^{q^k}-T,$$
$$L_k=[k]...[1], \;\;L_0=1.$$
$$\Pi=\prod\limits_{j=1}^{\infty} \left(1-\frac{[j]}{[j+1]}\right)$$

\begin{thm}[Theorem 2 in \cite{damamme}]\label{dm}
 For $s<q$ and  $a\in\mathbb{F}_q $, 
 $$L(1,\chi_s)=\sum_{k=0}^{\infty}(-1)^{k(s-1)}\frac{(T-a)^{s\frac{q^k-1}{q-1}}}{L_k}.$$
\end{thm}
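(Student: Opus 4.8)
The plan is to write $L(1,\chi_{s})$ as a Dirichlet series over monic polynomials and collapse it one degree at a time. Unwinding the definition one has $L(1,\chi_{s})=\sum_{f\ \mathrm{monic}}f(a)^{s}/f$, where $f(a)^{s}$ is the Dirichlet symbol modulo $T-a$ (it vanishes when $(T-a)\mid f$ and $s\ge 1$); since we work inside $\mathbb{F}_{q}((1/T))$, where $|1/f|=q^{-\deg f}$, the series converges and may be regrouped by degree,
$$L(1,\chi_{s})=\sum_{k\ge 0}S_{k},\qquad S_{k}:=\sum_{\substack{f\ \mathrm{monic}\\ \deg f=k}}\frac{f(a)^{s}}{f},$$
so it suffices to prove $S_{k}=(-1)^{k(s-1)}(T-a)^{s(q^{k}-1)/(q-1)}/L_{k}$ for every $k$; the substitution $T\mapsto T+a$, under which each $[k]$ is unchanged because $a\in\mathbb{F}_{q}$, reduces this to $a=0$ if one wishes. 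For $k=0$ this is $S_{0}=1$, and for $k=1$ it follows from the partial fraction expansion over $\mathbb{F}_{q}$ of $(T-a)^{s}/[1]=(T-a)^{s}/(T^{q}-T)$: since $s<q=\deg[1]$ there is no polynomial part, the factor $(T-a)^{s}$ kills the would-be pole at $T=a$ when $s\ge 1$, and $(T^{q}-T)'=-1$, so the residue at each $T=b$ with $b\neq a$ is $(-1)^{s-1}(a-b)^{s}$; summing yields $[1]S_{1}=(-1)^{s-1}(T-a)^{s}$ (for $s=0$ this is the classical $S_{1}=(T^{q}-T)'/(T^{q}-T)=-1/L_{1}$). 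The hypothesis $s<q$ enters here precisely to keep this expansion free of a polynomial part, and it will be used the same way below.

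The core step is the recursion
$$[k]\,S_{k}=(-1)^{s-1}(T-a)^{sq^{k-1}}\,S_{k-1}\qquad(k\ge 1),$$
which, after noting $[k]-[k-1]=[1]^{q^{k-1}}$, is the same as $S_{k}=\bigl(1-[k-1]/[k]\bigr)\,S_{1}^{\,q^{k-1}}\,S_{k-1}$. To prove it I would use the bijection $f=(T-a)g+d$ between monic polynomials $f$ of degree $k$ and pairs $(g,d)$ with $g$ monic of degree $k-1$ and $d=f(a)\in\mathbb{F}_{q}$; the terms with $d=0$ drop out for $s\ge 1$, and expanding $1/((T-a)g+d)$ as a $1/T$-adic series gives
$$S_{k}=\sum_{d\in\mathbb{F}_{q}}d^{s}\!\!\sum_{\substack{g\ \mathrm{monic}\\ \deg g=k-1}}\!\!\frac{1}{(T-a)g+d}=\sum_{m\ge 1}\frac{(-1)^{m-1}}{(T-a)^{m}}\Bigl(\sum_{d\in\mathbb{F}_{q}}d^{\,s+m-1}\Bigr)\!\!\sum_{\substack{g\ \mathrm{monic}\\ \deg g=k-1}}\!\!\frac{1}{g^{m}}.$$
The character sum $\sum_{d}d^{\,s+m-1}$ is $-1$ when $(q-1)\mid(s+m-1)$ and $0$ otherwise, so only the indices $m\equiv q-s\pmod{q-1}$ survive, and what remains is a series in the Carlitz power sums $\sum_{\deg g=k-1}g^{-n}$ with $n=q-s+i(q-1)$, $i\ge 0$. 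Feeding in the classical closed forms for those power sums and resumming should produce exactly $(-1)^{s-1}(T-a)^{sq^{k-1}}/[k]$ times $\sum_{\deg g=k-1}g(a)^{s}/g=S_{k-1}$. I expect this collapse to be the main obstacle: one must check that the selected series telescopes to a single term, and this is where the explicit power-sum identities (and, once more, the constraint $s<q$) carry the weight; everything else is formal.

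Granting the recursion, iterating from $S_{0}=1$ gives $S_{k}=\bigl(\prod_{m=1}^{k}(1-[m-1]/[m])\bigr)\,S_{1}^{(q^{k}-1)/(q-1)}$, and since
$$\prod_{m=1}^{k}\Bigl(1-\frac{[m-1]}{[m]}\Bigr)=\prod_{m=1}^{k}\frac{[1]^{q^{m-1}}}{[m]}=\frac{[1]^{(q^{k}-1)/(q-1)}}{L_{k}},$$
we obtain $S_{k}=\bigl([1]S_{1}\bigr)^{(q^{k}-1)/(q-1)}/L_{k}=\bigl((-1)^{s-1}(T-a)^{s}\bigr)^{(q^{k}-1)/(q-1)}/L_{k}$. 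Because $(q^{k}-1)/(q-1)\equiv k\pmod 2$ when $q$ is odd (and all signs are trivial when $q$ is even), this equals $(-1)^{k(s-1)}(T-a)^{s(q^{k}-1)/(q-1)}/L_{k}$, which finishes the proof. Finally, I would point out that the telescoping product $\prod_{m}(1-[m-1]/[m])$ is visibly of the same shape as $\Pi$, which is the structural reason $\Pi$ is the natural quantity against which to measure $L(1,\chi_{s})$ in the transcendence statement that follows.
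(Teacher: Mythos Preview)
The paper does not prove this statement: Theorem~\ref{dm} is quoted verbatim from Damamme~\cite{damamme} and serves only as the \emph{input} to the present article, whose contribution is the combinatorial proof of Theorem~\ref{main} starting from that formula. There is therefore no proof in the paper to compare your argument against.

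As for the argument itself, the overall architecture is sound (reduce to the degree-by-degree identity for $S_k$, check $k=0,1$, then set up the recursion $[k]S_k=(-1)^{s-1}(T-a)^{sq^{k-1}}S_{k-1}$ and iterate), but the heart of the proof---the recursion---is left as a hope rather than established. After your geometric-series expansion you arrive at
\[
S_k=-\sum_{i\ge 0}\frac{(-1)^{q-s+i(q-1)-1}}{(T-a)^{q-s+i(q-1)}}\sum_{\deg g=k-1}\frac{1}{g^{\,q-s+i(q-1)}},
\]
an infinite linear combination of the Carlitz power sums $\sum_{\deg g=k-1}g^{-n}$. What the recursion asks for, however, is a single multiple of $S_{k-1}=\sum_{\deg g=k-1}g(a)^{s}/g$, an object carrying the weights $g(a)^{s}$ which your expansion has completely erased. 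You flag this yourself as ``the main obstacle'', and indeed it is not a bookkeeping detail but essentially the whole theorem: nothing you have written explains how the weights $g(a)^{s}$ re-emerge from sums of bare $g^{-n}$'s.

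A concrete suggestion: do not expand $1/((T-a)g+d)$ as a geometric series at all. Instead, perform the $d$-sum first in closed form. The identity you proved for $k=1$ is really a statement about the variable, not about $T$: for any $X$ and $0<s<q$,
\[
\sum_{d\in\mathbb{F}_q}\frac{d^{s}}{X+d}=\frac{(-1)^{s-1}X^{s}}{X^{q}-X},
\]
by the same partial-fraction argument (the hypothesis $s<q$ kills the polynomial part). Applying this with $X=(T-a)g$ gives
\[
S_k=(-1)^{s-1}(T-a)^{s}\sum_{\deg g=k-1}\frac{g^{s}}{(T^{q}-a)g^{q}-(T-a)g},
\]
which at least keeps $g$ visible rather than dispersing it into infinitely many power sums; from here one can hope to iterate or to compare with $S_{k-1}$ after a further Frobenius-type manipulation. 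Whatever route you take, the step linking $S_k$ to $S_{k-1}$ must actually be carried out before the proof can be considered complete.
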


The following Theorem is proved in \cite{damamme} as a corollary of Theorem \ref{dm} using the criteria of De Mathan.

\begin{thm}[Corollary 2 in \cite{damamme}]\label{main}
 For $1<s<q$, $L(1,\chi_s)/\Pi$ is transcendental over $\mathbb{F}_q(T)$.
\end{thm}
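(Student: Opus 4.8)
\textbf{Proof strategy for Theorem~\ref{main}.} Throughout put $u=1/T$, so that $\mathbb{F}_q(T)=\mathbb{F}_q(u)$ and every series below lies in $\mathbb{F}_q[[u]]$. The plan is to use Christol's theorem — a series $\sum_{n\ge0}a_nu^n\in\mathbb{F}_q[[u]]$ is algebraic over $\mathbb{F}_q(u)$ if and only if $(a_n)_{n\ge0}$ is $q$-automatic — together with the one further property of $q$-automatic sequences the argument needs: a sequence is $q$-automatic if and only if its $q$-kernel $\{(a_{q^{e}n+r})_n:e\ge0,\ 0\le r<q^{e}\}$ is finite. So it is enough to show that the coefficient sequence of $L(1,\chi_s)/\Pi$ is not $q$-automatic. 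I shall also use freely the standard closure of $q$-automatic sequences under finite modification, termwise sum and product, and restriction to a $q$-recognizable set of indices (hence that the algebraic elements of $\mathbb{F}_q((u))$ form a field).

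\textbf{Step 1: strip off $\Pi$.} First I would rewrite $\Pi$. A direct computation from $[j]/[j+1]=(T^{q^{j}}-T)/(T^{q^{j+1}}-T)$ and $T^{N}-1=u^{-N}(1-u^{N})$ gives $\bigl(1-[j]/[j+1]\bigr)^{-1}=(1-u^{q^{j+1}-1})/(1-u^{(q-1)q^{j}})$, hence, after an index shift, $\Pi=A/B$ with $A=\prod_{j\ge1}(1-u^{(q-1)q^{j}})$ and $B=\prod_{i\ge2}(1-u^{q^{i}-1})$. The coefficient of $u^{n}$ in $A$ vanishes unless all base-$q$ digits of $n$ lie in $\{0,q-1\}$ with vanishing units digit, in which case it is $(-1)^{r}$ with $r$ the number of digits equal to $q-1$; that sequence is manifestly $q$-automatic, so $A$ is algebraic over $\mathbb{F}_q(T)$. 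Since $B$ differs from $P_\infty:=\prod_{i\ge1}(1-u^{q^{i}-1})$ only by the polynomial factor $1-u^{q-1}$, the field property reduces Theorem~\ref{main} to the single assertion: $G:=L(1,\chi_s)\cdot P_\infty$ is transcendental over $\mathbb{F}_q(T)$.

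\textbf{Step 2: a combinatorial formula for $G$.} Taking $a=0$ in Theorem~\ref{dm}, writing $L_k=T^{k}\prod_{i=1}^{k}(T^{q^{i}-1}-1)$ and converting to $u$, the $k$-th summand of $L(1,\chi_s)$ becomes $(-1)^{k(s-1)}u^{d_k}/P_k$, where $P_k=\prod_{i=1}^{k}(1-u^{q^{i}-1})$ and $d_k=(q-s)(q^{k}-1)/(q-1)$ has base-$q$ expansion equal to the digit $q-s$ written $k$ times. Multiplying by $P_\infty$ and cancelling $P_k$ gives
$$G=\sum_{k\ge0}(-1)^{k(s-1)}\,u^{d_k}\prod_{i>k}\bigl(1-u^{q^{i}-1}\bigr).$$
Expanding the $k$-th product over finite sets $S\subseteq\{k+1,k+2,\dots\}$, the coefficient $c_n$ of $u^{n}$ in $G$ equals $\sum(-1)^{k(s-1)+|S|}$ over pairs $(k,S)$ with $d_k+\sum_{i\in S}(q^{i}-1)=n$; because the digits of $d_k$, the empty position $k$, and the support of $S$ occupy disjoint ranges of digits, this says exactly — with $m=|S|$ — that the base-$q$ expansion of $n+m$ has digit $q-s$ in positions $<k$, digit $0$ in position $k$, and digits in $\{0,1\}$ in positions $>k$, precisely $m$ of them being $1$. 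In particular the term $k=0$ is $P_\infty$ itself, while a term $k\ge1$ perturbs $c_n$ only when $n=d_k$ or $n\ge d_k+q^{k+1}-1$, since $\prod_{i>k}(1-u^{q^{i}-1})=1-u^{q^{k+1}-1}+\cdots$.

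\textbf{Step 3 and the main obstacle.} The perturbing terms $k\ge1$ touch only a sparse set of indices $E$; the heart of the argument is to show that $E$ is $q$-recognizable, so that on $\mathbb{N}\setminus E$ one has $c_n=[u^{n}]P_\infty$. Were $(c_n)$ $q$-automatic, one could then extract from it — over $\mathbb{N}\setminus E$, after a suitable $q$-recognizable re-indexing — a non-$q$-automatic sequence built from the coefficients of $P_\infty$; these are supported on the Legendre-type set $\{N-s_q(N):N\text{ a }\{0,1\}\text{-digit integer}\}$ (with $s_q$ the base-$q$ digit sum), because $P_\infty=\prod_{i\ge1}(1-T^{1-q^{i}})$ is, up to an algebraic factor, the inverse of the Carlitz period $\tilde\pi$. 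Thus the proof would be completed by establishing that this Legendre-type set is not $q$-recognizable — equivalently, that its characteristic sequence has an infinite $q$-kernel; classically this is just the transcendence of $\tilde\pi$, but here it should be proved directly by constructing that infinite kernel. The two hard points, where I expect essentially all the work to be, are precisely these: (i) checking that the interference of the $k\ge1$ terms is genuinely confined to a $q$-recognizable set and leaves a usable copy of $P_\infty$'s coefficients — this comes down to controlling the carries produced when the a priori unknown multiplicity $m$ is added to $n$, and the case $s=q-1$, where the digit $q-s=1$ collides with the $\{0,1\}$ digits generated by the $k=0$ term, will need a finer analysis; and (ii) exhibiting the infinite $q$-kernel of the Legendre-type set. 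Everything else — Christol's theorem, the automaticity of $A$, and the manipulations of the first two steps — is routine.
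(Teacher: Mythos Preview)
Your Steps 1 and 2 coincide with the paper's setup: your series $G=L(1,\chi_s)\cdot P_\infty$ is exactly the paper's $\frac{\alpha}{\Pi}L(1,\chi_s)$ (since $\alpha=(1-u^{q-1})A$ and $\Pi=A/B$, so $\alpha/\Pi=(1-u^{q-1})B=P_\infty$), and your displayed expansion of $G$ is the formula~$(*)$ with $a=0$. So far the two arguments are identical.

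Where you diverge is Step~3, and here there is a genuine gap. Your plan is to isolate the $k=0$ summand $P_\infty$ and treat the contributions of the summands $k\ge1$ as a perturbation supported on a $q$-recognizable set $E$, thereby reducing everything to the non-automaticity of the coefficients of $P_\infty$ (equivalently, the transcendence of $\Pi$, which is Allouche's theorem in \cite{allouche}). The difficulty is not part~(ii)---that is exactly \cite{allouche} and can be quoted---but part~(i). Two things are unclear. First, it is not evident that $E=\bigcup_{k\ge1}\{d_k+\sum_{i\in S}(q^i-1):S\subseteq\{k+1,k+2,\dots\}\}$ is $q$-recognizable: the base-$q$ digits of an element of $E$ depend on carries governed by $|S|$ versus $d_k$, with no bound on $|S|$, and you would have to produce an automaton that tracks these carries uniformly in $k$. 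Second, and more seriously, even if $E$ is $q$-recognizable, knowing that $c_n=[u^n]P_\infty$ on $\mathbb{N}\setminus E$ only tells you that $(c_n\cdot\mathbf{1}_{\mathbb{N}\setminus E})_n$ equals the coefficients of $P_\infty$ \emph{restricted to} $\mathbb{N}\setminus E$; to derive a contradiction you must show that this restricted sequence is already non-automatic, i.e.\ that the witnesses to the infinite $q$-kernel of $P_\infty$ survive off $E$. Your phrase ``after a suitable $q$-recognizable re-indexing'' hides precisely this step, and nothing in the proposal indicates how to carry it out.

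The paper avoids this reduction altogether. Instead of the $q$-kernel, it uses the weaker necessary condition of Lemma~\ref{finite}: for a $q$-automatic sequence $(u(n))$, the family of one-parameter sequences $\bigl(u([1^m0^j]_q)\bigr)_{m\ge1}$, indexed by $j$, is finite. The bulk of the paper (Lemmas~\ref{ep}--\ref{L:fin}) is a direct combinatorial computation of $u([1^m0^j]_q)$ as a finite sum $\sum_n d_{j,m,n}$ coming from \emph{all} summands $k\ge0$ simultaneously; one shows each column $(d_{j,m,n})_m$ is eventually periodic, and that the length of the pre-period of the total sum strictly increases with $j$. There is no attempt to separate the $k=0$ term from the others, and no appeal to the transcendence of $\Pi$.

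In short: your reduction to $P_\infty$ is an attractive idea, but the step you label~(i) is not a routine verification---it is the whole problem, and as stated the proposal does not give a mechanism for it. The paper's route, computing along the specific index family $[1^m0^j]_q$, sidesteps exactly this obstacle.
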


Our goal in this article is to give another proof of Theorem \ref{main} starting from the expression of Theorem \ref{dm}, 
by means of properties of automatic sequences.

For an integer $k\geq 2$, one of the equivalent definitions of
a $k$-automatic sequence is a sequence that can be generated by a $k$-DFAO (deterministic finite automaton 
with output). We recall here the definition of the latter as we will need it in the proof of Lemma \ref{finite}:\\
A $k$-DFAO is a $6$-tuple 
$$M=(Q,\Sigma_k,\delta,q_0,\Delta,\tau)$$
where 
$Q$ is a finite set of states,
$\Sigma_k$ the input alphabet $ \{0,1,...,k-1\},$
$\delta: Q\times \Sigma_k \rightarrow Q $ the transition function,
$q_0\in Q$ the initial state,
$\Delta$ the output alphabet, and $\tau: Q\rightarrow \Delta$ the output function. We expand $\delta$ to a function from 
$Q\times \Sigma_k\rightarrow Q$ by defining, for a word $w=w_1...w_j$ of length at least $2$ in $\Sigma_k^*$, 
$\delta(q,w)=\delta(\delta(1,w_j),w_1...w_{j-1})$. 
The sequence $(u(n))_{n\geq 0}$ generated by the automaton $M$ is defined by $u(0)=\tau(q_0)$ and $u(n)=\tau(\delta(q_0,(n)_k))$ for $n>0$, 
where $(n)_k$ is the base-$k$ expansion
of $n$. In other words, we define $u(n)$ to be the output when we feed the base-$k$ expansion of $n$ to $M$ starting from the least significant digit. 

The following theorem reduces the problem of proving the transcendence of a series over $\mathbb{F}_q(T)$
to proving the non-$q$-automaticity of the sequence of its
coefficients.

\begin{thm}[Christol, Kamae, Mend\`es France, and Rauzy]\label{Christol}
 The formal power series $f(T)=\sum_{n\geq 0}^{\infty} f_n T^{-n} \in \mathbb{F}_q\left[\left[\frac{1}{T}\right]\right]$ is algebraic
 over the fraction field $\mathbb{F}_q(T)$
 if and only if the sequence $(f_n)_n$ is $q$-automatic.
\end{thm}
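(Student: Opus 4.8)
The plan is to prove both implications of the biconditional by exploiting the \emph{section} (Cartier) operators. Writing $x=1/T$, so that $f=\sum_{n\ge 0}f_n x^n\in\mathbb{F}_q[[x]]$ and ``algebraic over $\mathbb{F}_q(T)$'' means algebraic over $\mathbb{F}_q(x)=\mathbb{F}_q(T)$, I define for $0\le r<q$ the operator $\Lambda_r$ on $\mathbb{F}_q[[x]]$ by $\Lambda_r\!\left(\sum_n g_n x^n\right)=\sum_n g_{qn+r}x^n$. The one identity driving everything is
\[
 g=\sum_{r=0}^{q-1} x^r\,(\Lambda_r g)^q,
\]
which holds for every $g\in\mathbb{F}_q[[x]]$ precisely because the coefficients lie in $\mathbb{F}_q$ and hence satisfy $g_n^q=g_n$. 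Recall also (Eilenberg) that $(f_n)$ is $q$-automatic iff its $q$-kernel is finite, that this kernel is exactly the set of series obtained from $f$ by applying all finite compositions of the $\Lambda_r$, and that it is closed under each $\Lambda_r$.

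For the direction automatic $\Rightarrow$ algebraic, assume the $q$-kernel $K=\{h_1=f,h_2,\dots,h_d\}$ is finite, and let $V=\sum_i \mathbb{F}_q(x)\,h_i$ be its $\mathbb{F}_q(x)$-linear span in $\mathbb{F}_q((x))$, of dimension $\le d$. Applying the identity to each $h_i$ and using that every $\Lambda_r h_i$ again lies in $K$ shows $h_i\in \operatorname{span}_{\mathbb{F}_q(x)}\{h_j^{\,q}\}=:V^{[q]}$, so $V\subseteq V^{[q]}$. Since the $q$-th powers of a basis of $V$ already span $V^{[q]}$, we have $\dim V^{[q]}\le\dim V$, forcing $V=V^{[q]}$. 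Consequently the Frobenius $y\mapsto y^q$ maps $V$ into itself, so $f,f^q,f^{q^2},\dots$ all lie in the finite-dimensional space $V$ and are $\mathbb{F}_q(x)$-linearly dependent: a nontrivial relation $\sum_{k=0}^{d}a_k f^{q^k}=0$ with $a_k\in\mathbb{F}_q(x)$ holds, which is a nonzero polynomial equation for $f$. Hence $f$ is algebraic.

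For the converse algebraic $\Rightarrow$ automatic, suppose $f$ is algebraic, so $E=\mathbb{F}_q(x)(f)$ is finite over $\mathbb{F}_q(x)$. All the Frobenius powers $f^{q^i}$ lie in $E$, hence are $\mathbb{F}_q(x)$-linearly dependent; clearing denominators and a standard reduction give a linearized equation $\sum_{i=0}^{m}A_i\,f^{q^i}=0$ with $A_i\in\mathbb{F}_q[x]$ and $A_0\neq0$. Using this relation together with the computation rule $\Lambda_r\!\big(x^c G^q\big)=x^{(c-r)/q}G$ when $c\equiv r\pmod q$ (and $0$ otherwise), I would build an $\mathbb{F}_q$-vector space $\mathcal V$ of power series, of the schematic shape $\operatorname{span}_{\mathbb{F}_q}\{\,x^a f^{q^i}/A_0^{\,b}\,\}$ with $a,b,i$ bounded in terms of $m$ and $\max_i\deg A_i$, arranged so that $f\in\mathcal V$ and $\Lambda_r(\mathcal V)\subseteq\mathcal V$ for every $r$. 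Granting this, the $q$-kernel of $(f_n)$---generated from $f$ by iterating the $\Lambda_r$---is contained in $\mathcal V$; and because $\mathbb{F}_q$ is \emph{finite}, a finite-dimensional $\mathbb{F}_q$-space is a finite set, so the kernel is finite and $(f_n)$ is $q$-automatic.

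The easy direction is the first, where the only delicate point is the dimension count yielding $V=V^{[q]}$. The main obstacle is the converse: producing the explicit $\Lambda_r$-stable, finite-dimensional $\mathbb{F}_q$-space $\mathcal V$. This requires carefully bounding the degrees of the polynomial numerators and controlling the denominator as the section operators are applied, since the rule for $\Lambda_r(x^cG^q)$ divides indices by $q$ and one must verify the relevant exponents and denominators stay within a fixed finite range. The preliminary reduction to a relation with $A_0\neq0$---needed so that $f$ itself, and not merely some $f^{q^j}$, is captured---is the other technical step, handled by choosing a dependence among the $f^{q^i}$ with the fewest terms.
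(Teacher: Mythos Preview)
The paper does not give its own proof of this theorem: it is quoted as a known result of Christol, Kamae, Mend\`es France and Rauzy (reference~\cite{christol} in the paper) and is used only as a black box to convert non-automaticity of the coefficient sequence into transcendence. So there is nothing in the paper to compare your argument against.

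That said, your sketch follows the standard Cartier-operator proof and is essentially sound. The direction automatic $\Rightarrow$ algebraic is complete: the identity $g=\sum_{r=0}^{q-1}x^r(\Lambda_r g)^q$ together with finiteness of the $q$-kernel gives $V\subseteq V^{[q]}$, and your dimension argument (using that Frobenius is injective on $\mathbb{F}_q((x))$, so $q$-th powers of a basis stay independent) correctly yields $V=V^{[q]}$ and hence Frobenius-stability of $V$. For the converse you have only outlined the construction of the $\Lambda_r$-stable finite $\mathbb{F}_q$-space $\mathcal V$; as you yourself note, the real work---which you have not carried out---is the explicit bookkeeping of exponents and denominators showing that a set of the shape $\{x^a f^{q^i}/A_0^{\,b}:a,b,i\text{ bounded}\}$ (or rather its $\mathbb{F}_q$-span intersected with $\mathbb{F}_q[[x]]$) is closed under every $\Lambda_r$. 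One standard way to avoid denominators altogether is to first replace $f$ by $A_0 f$ and renormalize the Ore relation so that one may work inside $\mathbb{F}_q[x]$-combinations of the $f^{q^i}$ with bounded polynomial coefficients; this makes the stability check a clean degree estimate. Until those bounds are written down, the second half remains a plan rather than a proof.
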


The following lemma gives a necessary condition of $k$-automaticity, and therefore a way of proving that a sequence is not $k$-automatic.
For a letter $x$ in $\{0,...,k-1\}$, the notation $x^m$ means the concatenation of $m$ times $x$. 
For a word $w=w_0...w_n\in\{0,...,k-1\}^*$, we let $[w]_q$ denote the integer whose base-$q$ expansion is $w$. 
\begin{lem}\label{finite}
 Let $(u(n))_{n\geq 0}$ be a $k$-automatic sequence. Then the set of sequences 
 $$\{(u([1^n 0^j]_k))_{n\geq 1}\;\;|\:j\in \mathbb{N} \}$$
 is finite.
\end{lem}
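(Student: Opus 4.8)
The plan is to unwind the definition of a $k$-DFAO generating $(u(n))_n$ and to observe that, for fixed $j$, the sequence $n\mapsto u([1^n0^j]_k)$ is completely determined by a single state of the automaton — namely the state reached after reading $j$ zeros from the initial state. Since the automaton has only finitely many states, only finitely many such sequences can occur, which is exactly the assertion.

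First I would fix, by one of the equivalent definitions of $k$-automaticity, a $k$-DFAO $M=(Q,\Sigma_k,\delta,q_0,\Delta,\tau)$ with $u(n)=\tau(\delta(q_0,(n)_k))$ for $n\geq 1$. Next I would record the elementary fact that the base-$k$ expansion of $[1^n0^j]_k$ is exactly the word $1^n0^j$: indeed $[1^n]_k=\tfrac{k^n-1}{k-1}$, and multiplying by $k^j$ appends $j$ zeros, and since $n\geq 1$ there is no leading zero. The key step is then to feed this word to $M$. Recall that $M$ consumes the digits starting from the least significant one, i.e. from the right end of $1^n0^j$; by the recursive definition of the extended transition function, the trailing block $0^j$ is processed first, so
$$\delta(q_0,1^n0^j)=\delta\bigl(\delta(q_0,0^j),1^n\bigr).$$
Setting $p_j:=\delta(q_0,0^j)\in Q$, we obtain $u([1^n0^j]_k)=\tau(\delta(p_j,1^n))$ for all $n\geq 1$.

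To finish, define for each $p\in Q$ the sequence $g_p\colon\{n\geq 1\}\to\Delta$ by $g_p(n)=\tau(\delta(p,1^n))$. The computation above says precisely that $(u([1^n0^j]_k))_{n\geq 1}=g_{p_j}$, so the set in the statement is contained in $\{g_p\mid p\in Q\}$, which is finite because $Q$ is finite (the degenerate case $j=0$ is included with $p_0=q_0$). There is no genuine obstacle here; the only point demanding care is bookkeeping the order in which the digits of $1^n0^j$ are fed to the automaton, so that the trailing zeros are consumed before the block of ones and the dependence on $j$ genuinely factors through the finite state set $Q$.
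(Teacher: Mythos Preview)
Your proposal is correct and follows exactly the same approach as the paper: you fix a $k$-DFAO generating $(u(n))_n$, factor $\delta(q_0,1^n0^j)=\delta(\delta(q_0,0^j),1^n)$, and conclude since $\delta(q_0,0^j)$ ranges over the finite set $Q$. Your write-up is simply a more detailed version of the paper's two-line argument.
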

\begin{proof}
 Let $M=(Q,\Sigma_k,\delta,q_0,\Delta,\tau)$ be a $k$-DFAO that generates $(u(n))_{n\geq 0}$. 
 Then $(u([1^n 0^j]_k))=\tau(\delta(q_0,1^n0^j))=\tau(\delta(\delta(q_0,0^j),1^n)).$ As $\delta(q_0,0^j)\in Q$ and $Q$ is finite, the set  
 $\{(u([1^n 0^j]_k))_{n\geq 0}\;\;|\:j\in \mathbb{N} \}$
 is finite.
\end{proof}

As in \cite{allouche}, we define 
$$\alpha=\prod\limits_{j=0}^{\infty}\left(1-\frac{T^{q^j}}{T^{q^{j+1}}}\right). $$
As $\alpha$ is algebraic over $\mathbb{F}_q(T)$, in order to prove Theorem \ref{main}, we only need to prove the transcendence of
$\frac{\alpha}{\Pi}L(1,\chi_s)$. From Theorem \ref{dm}, we deduce the expression that we will use for this article:
\begin{equation}\frac{\alpha}{\Pi}L(1,\chi_s)=\sum\limits_{k=0}^{\infty}(-1)^{k(s-1)}\left(\frac{1}{T}\right)^{(q-s)\frac{q^k-1}{q-1}}
\left(1-\frac{a}{T}\right)^{s\cdot\frac{q^k-1}{q-1}}\cdot \prod\limits_{j=k+1}^\infty\left(1-\left(\frac{1}{T}\right)^{q^j-1}\right).\tag{*}
\end{equation}

In Section \ref{proof}, we will prove the following proposition:

\begin{prop}\label{key}
 Let $s$ be an integer such that $1<s<q$. We denote by $u(n)$ coefficients of $\frac{1}{T^n}$ in $\frac{\alpha}{\Pi}L(1,\chi_s)$.
 Then for all $j\in \mathbb{N}$, the sequence $(u([1^n0^j]_q))_n$ is ultimately periodic and the length of the initial non-periodic segment
 of $(u([1^n0^j]_q))_n$ is a strictly increasing function with respect to $j$. In particular, the set
 $$\{(u([1^n 0^j]_q))_{n\geq 0}\;\;|\:j\in \mathbb{N} \}$$ is infinite.
\end{prop}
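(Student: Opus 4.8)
The plan is to compute the coefficients $u([1^n0^j]_q)$ directly from the explicit expression $(*)$. First I would analyze the base-$q$ digits of the exponents appearing in $(*)$. The key structural fact is that $\frac{q^k-1}{q-1} = 1 + q + q^2 + \dots + q^{k-1}$ is the repunit $\underbrace{1\cdots1}_k$ in base $q$, so $(q-s)\frac{q^k-1}{q-1}$ has base-$q$ expansion $\underbrace{(q-s)\cdots(q-s)}_k$ (valid since $0 < q-s < q$), and similarly the term $\left(1-\frac{a}{T}\right)^{s(q^k-1)/(q-1)}$ expands, for suitable $a$, into a polynomial in $1/T$ whose nonzero coefficients sit at a controlled set of positions. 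Likewise the product $\prod_{j=k+1}^\infty\left(1-(1/T)^{q^j-1}\right)$ contributes a series supported on exponents that are sums of distinct numbers of the form $q^j-1$ with $j \ge k+1$; each $q^j - 1 = \underbrace{(q-1)\cdots(q-1)}_j$ is a string of $j$ copies of $q-1$ in base $q$. I would then determine, for a given power $1/T^N$ with $N = [1^n0^j]_q$ (i.e.\ $N$ has base-$q$ expansion consisting of $j$ zeros followed by $n$ ones, reading from least significant digit), exactly which of the summands $k$ in $(*)$ can contribute to the coefficient $u(N)$, and with what value.

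The second step is to turn this digit analysis into a clean description of the sequence $n \mapsto u([1^n0^j]_q)$ for fixed $j$. The expectation is that, because the exponent $N$ is forced to have a very rigid digit pattern (a block of ones over a block of zeros), only finitely many indices $k$ in the sum $(*)$ are compatible with producing that pattern, and among those the contributions either telescope or stabilize. Concretely I anticipate showing that once $n$ exceeds some threshold $n_0(j)$ — governed by how large $k$ must be before the repunit-like exponents $(q-s)\frac{q^k-1}{q-1}$ overshoot the length-$n$ window, and by where the zero block of width $j$ forces cancellations in the product term — the value $u([1^n0^j]_q)$ settles into a periodic pattern in $n$ (in many such problems the eventual period is $1$, i.e.\ the sequence is ultimately constant, but the statement only claims ultimate periodicity). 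The heart of the argument is then to pin down this threshold $n_0(j)$ and prove it is \emph{strictly increasing} in $j$: increasing $j$ by one inserts an extra zero, which shifts the position at which the product term $\prod_{j'=k+1}^\infty(1-(1/T)^{q^{j'}-1})$ first interferes with the repunit block, hence delays the onset of periodicity by a definite amount.

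I would carry out the steps in this order: (1) write each relevant exponent in base $q$ and record its digit string; (2) for fixed $j$ and variable $n$, solve the combinatorial equation ``$[1^n 0^j]_q$ equals (exponent from summand $k$) plus (exponent from the product expansion)'' to see which $(k, \text{partition-of-the-product})$ pairs are admissible; (3) read off $u([1^n0^j]_q)$ as a sum of signs $(-1)^{k(s-1)}$ times coefficients coming from the $(1-a/T)$-binomial expansion over the admissible pairs; (4) show this sum is eventually periodic in $n$ and extract the non-periodic length as an explicit function of $j$; (5) verify that function is strictly increasing, so the sequences for distinct $j$ are pairwise distinct, giving the infinitude of the set. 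Combined with Lemma \ref{finite} and Theorem \ref{Christol}, Proposition \ref{key} immediately yields the non-$q$-automaticity of $(u(n))_n$ and hence Theorem \ref{main}.

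The main obstacle I foresee is step (2): controlling the interaction between the two sources of exponents — the rigid repunit-type term $(q-s)\frac{q^k-1}{q-1}$ (possibly perturbed by the $(1-a/T)^{s(q^k-1)/(q-1)}$ factor, where a judicious choice of $a\in\mathbb{F}_q$, e.g.\ $a=0$ if permitted, or exploiting that $s<q$ keeps the binomial coefficients $\binom{s(q^k-1)/(q-1)}{i}$ nonzero mod $p$ only in a restricted range, should simplify matters) and the sparse but long product term. One must be careful that carries in base-$q$ addition do not spoil the digit pattern; the saving grace is that the digit ``$1$'' repeated is the smallest possible nonzero digit, so any contribution forcing a larger digit or a carry is excluded outright, which sharply limits the admissible configurations. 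Making this exclusion precise, and thereby isolating exactly the finitely many contributing $k$'s for each $(n,j)$, is where the real work lies; everything downstream is bookkeeping.
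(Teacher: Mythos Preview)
Your outline follows essentially the same strategy as the paper: decompose $[1^m0^j]_q$ as a sum of a term in $[[\bar s^k]_q,[1^k0]_q]$ coming from the first two factors of the $k$-th summand of $(*)$ plus a sum of distinct $q^{j'}-1$'s from the product, record the resulting contributions, and show the column-sums are ultimately periodic in $m$. The paper does exactly this, packaging the admissible decompositions into a table $b_{j,m,n}$ and the contributions into $d_{j,m,n}$, then proving (via Lucas) that each column $(d_{j,m,n})_{m\ge n}$ is periodic with ratio $(-1)^{s-1}(-a)^s$.

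Where your plan is thin is precisely at step~(4)--(5), which you describe as bookkeeping. The paper does \emph{not} extract the non-periodic length as an explicit function of $j$; instead it proves $IN\bigl((u([1^m0^{j+1}]_q))_m\bigr)\ge q^j$ by induction on $j$, and the inductive step rests on a structural lemma you have not anticipated: the decomposition table for $j+1$ contains, beyond column $q^j$, an exact shifted copy of the table for $j$ (Lemma~\ref{L:end}). This self-similarity is what lets one separate $u([1^m0^{j+1}]_q)$ into a sum over the first $q^j$ columns (periodic from $m=q^j$ by the column-periodicity lemma) plus $(-1)^q u([1^{m-q^j+1}0^j]_q)$, whose $IN$ is at least $q^{j-1}+q^j-1>q^j$ by induction. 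Without this recursion, your hope that ``increasing $j$ by one \dots\ delays the onset of periodicity by a definite amount'' remains heuristic, because the many columns could in principle cancel.

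Two further points. First, you cannot choose $a$: it is determined by $\chi_s$, and the argument must work uniformly. Second, cancellation genuinely occurs and must be confronted in the base case: when $1-s\cdot a^{s-1}=0$ (which happens e.g.\ for $q=3$, $s=2$, $a=1$) the last two nonzero columns at $j=2$ annihilate each other and $u([1^m0^2]_q)\equiv 0$, so one must go to $j=3$ and check non-cancellation there by hand (Lemma~\ref{L:fin}). Your remark that ``any contribution forcing a larger digit or a carry is excluded outright'' is also not quite right: the decompositions do involve carries (e.g.\ $b_{j,m,n}$ can have digits larger than $1$), and the control comes from the iterative subtraction algorithm and Lemma~\ref{ep}, not from a no-carry constraint.
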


We obtain immediately the following Corollary using Theorem \ref{Christol} and Lemma \ref{finite}.
\begin{coro}
For $1<s<q$, series $\frac{\alpha}{\Pi}L(1,\chi_s)$ is transcendental over $\mathbb{F}_q(T)$.
\end{coro}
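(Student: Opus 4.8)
Granting Proposition~\ref{key}, the Corollary is immediate. If $\frac{\alpha}{\Pi}L(1,\chi_s)=\sum_{n\ge 0}u(n)T^{-n}$ were algebraic over $\mathbb{F}_q(T)$, then by Theorem~\ref{Christol} the sequence $(u(n))_n$ would be $q$-automatic, hence by Lemma~\ref{finite} the set $\{(u([1^n0^j]_q))_n : j\in\mathbb{N}\}$ would be finite; this contradicts the final clause of Proposition~\ref{key}. So $\frac{\alpha}{\Pi}L(1,\chi_s)$ is transcendental (and, $\alpha$ being algebraic over $\mathbb{F}_q(T)$, Theorem~\ref{main} follows as well). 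The substance therefore lies in Proposition~\ref{key}, so I outline how I would prove it.

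Write $x=1/T$, $e_k=\frac{q^k-1}{q-1}=1+q+\cdots+q^{k-1}$, $\varepsilon_k=(-1)^{k(s-1)}$, and $Q_k(x)=\prod_{j\ge k+1}(1-x^{q^j-1})$. The key structural input, read directly off $(*)$, is a self‑similar recursion. Let $R_k(x)$ be the polynomial obtained by collecting the first $k+1$ summands of $(*)$, namely
\[
R_k(x)=\sum_{l=0}^{k}\varepsilon_l\,x^{(q-s)e_l}\,(1-ax)^{se_l}\prod_{i=l+1}^{k}\bigl(1-x^{q^i-1}\bigr).
\]
Then $(*)$ reads $\frac{\alpha}{\Pi}L(1,\chi_s)=R_k(x)Q_k(x)+\sum_{l>k}\varepsilon_l x^{(q-s)e_l}(1-ax)^{se_l}Q_l(x)$, where the error term has valuation $(q-s)e_{k+1}$; since $s>1$ one has $(q-s)e_{k+1}<q^{k+1}-1$, so modulo $x^{(q-s)e_{k+1}}$ we may drop $Q_k$ and conclude that $R_k$ is exactly the truncation of $\sum_n u(n)x^n$ to degrees $<e_{k+1}$. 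Moreover $R_k=R_{k-1}(1-x^{q^k-1})+\varepsilon_k x^{(q-s)e_k}(1-ax)^{se_k}$. Hence for $e_k\le N\le e_{k+1}-1$, $u(N)$ equals $-u(N-q^k+1)$ when $q^k-1\le N\le q^k+e_k-2$ (and contributes $0$ from that term otherwise), plus $\varepsilon_k$ times the coefficient of $x^{\,N-(q-s)e_k}$ in $(1-ax)^{se_k}$ when $N\ge(q-s)e_k$ (and $0$ otherwise). I would also record the elementary identity, valid because $s<q$, that the coefficient of $x^{M}$ in $(1-ax)^{se_k}=\prod_{i=0}^{k-1}(1-a^{q^i}x^{q^i})^{s}$ equals $\prod_i\binom{s}{d_i}(-a)^{d_iq^i}$ when the base‑$q$ digits $d_i$ of $M$ all satisfy $d_i\le s$ and $M<q^k$, and is $0$ otherwise; with $a^{q-1}=1$ (for $a\ne 0$; the case $a=0$, where the blocks degenerate to monomials, is analogous) this makes each ``block'' contribution an explicit quantity that is \emph{periodic} in the relevant parameter.

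Now specialize to $N=[1^n0^j]_q=e_nq^j=e_{n+j}-e_j$, which lies in $[e_{n+j-1},\,e_{n+j}-1]$. The recursive term gives $u(e_nq^j)=-u(e_{n-1}q^j+1)+(\text{a periodic-in-}n\text{ term})$, and iterating $u(e_{n-r}q^j+r)=-u(e_{n-r-1}q^j+r+1)+(\cdots)$ for $r=0,1,2,\dots$: each step is legitimate provided $r\le e_j-2$ and the intermediate index $n-r$ is large enough, and after $e_j-1$ steps the reduction lands on $u(qe_{n+j-e_j})=u([1^{\,n+j-e_j}0]_q)$, for which the same formula forces the recursive term to vanish and leaves only a periodic‑in‑$n$ block term. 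Thus, once $n$ exceeds a threshold of order $e_j=\frac{q^j-1}{q-1}$, the value $u([1^n0^j]_q)$ is given by a fixed periodic pattern in $n$, while below that threshold the reduction cannot be completed — and this is exactly the non‑periodic initial segment. Consequently $(u([1^n0^j]_q))_n$ is ultimately periodic, with a pre‑period which is a strictly increasing function of $j$ (this last point being what requires real work; the boundary cases $j=0,1$ are purely periodic, checked by hand), and in particular $\{(u([1^n0^j]_q))_n:j\in\mathbb{N}\}$ is infinite.

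The main obstacle is precisely the exact determination of the pre‑period: turning the rough threshold ``$\sim e_j$'' into an honest, strictly increasing formula demands careful control of the base‑$q$ carry behaviour in the iteration — cases according to the size of $s$ relative to $q$, the parity governing $\varepsilon_k$, and the residues modulo $q-1$ controlling the $(1-ax)^{se_k}$‑coefficients — together with a proof that the iteration terminates uniformly in $n$, so that genuine ultimate periodicity (and not merely agreement with a periodic sequence along a subsequence) is obtained. By comparison, the derivation of the self‑similar recursion, the digit identity for $(1-ax)^{se_k}$, and the verification that $e_nq^j$ falls in the claimed interval are all routine.
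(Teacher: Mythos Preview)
Your derivation of the Corollary from Proposition~\ref{key}, Theorem~\ref{Christol} and Lemma~\ref{finite} is correct and is exactly the paper's argument (stated there in one sentence just before the Corollary).

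Since you also outline Proposition~\ref{key}, a brief comparison: your route and the paper's are the same idea in different packaging. The paper expands each summand $S_k$ of $(*)$ and, for $b=[1^m0^j]_q$, tracks all decompositions $b=r_k+\sum_{i}\varepsilon_i(q^i-1)$ by an explicit algorithm, recording the intermediate values $b_{j,m,n}$ in tables; your recursion $R_k=R_{k-1}(1-x^{q^k-1})+\varepsilon_k x^{(q-s)e_k}(1-ax)^{se_k}$ encodes precisely the same subtraction-of-$(q^l-1)$ iteration, and your ``block term'' is the paper's $d_{j,m,n}$. The paper then proves (i) periodicity of each column $(d_{j,m,n})_{m\ge n}$ (Lemma~\ref{periodic}, via Lucas), (ii) a self-similarity between the $j$- and $(j{+}1)$-tables (Lemma~\ref{L:end}: the tail of the $(j{+}1)$-table is the $j$-table shifted by $q^j$), and (iii) an explicit base case at $j=2,3$ with a case split on whether $1-s\,a^{s-1}=0$ and $q=3$ (Lemmas~\ref{L:j=2}, \ref{L:fin}); induction on $j$ then gives the strictly increasing pre-period. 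Your outline hits exactly these beats and correctly flags (iii) as the genuine work. One small caution: your claim that ``$R_k$ is exactly the truncation to degrees $<e_{k+1}$'' is slightly loose ($R_k$ only \emph{agrees} with the series modulo $x^{(q-s)e_{k+1}}$; it can have higher-degree terms), and your asserted landing point after $e_j-1$ steps should be checked against the paper's bookkeeping, where the relevant shift between levels is $q^{j}$ (Lemma~\ref{L:end}) rather than $e_j$; but these are cosmetic in an outline and do not affect the strategy.
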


\section{Proof of Proposition \ref{key}}\label{proof}
We let $S_k$ denote the $k$-th summand in the expression $(*)$. First we observe that for $b\in \mathbb{N}$, the term $T^{-b}$ may 
appear in $S_k$ for more than one $k$. We want to determine $[T^{-b}]S_k$, the coefficient of $T^{-b}$ in $S_k$. For $1<s<q$, we denote $q-s$ by $\bar{s}$,
then from $(*)$ we see that if $[T^{-b}]S_k\neq 0$, then $b$ can be written as  
\begin{equation}
  b=r_k+\sum_{j=k+1}^{\infty}\varepsilon_j (q^j-1), \label{eq:dec}
\end{equation}
where $r_k\in [[\bar{s}^k]_q,[1^k0]_q ]$, $ \varepsilon_j\in\{0,1\}$  for $j\geq k+1  $ and $\varepsilon_j=0$ for $j$ big enough.
The following Lemma implies that such a decomposition is unique for $b$ and $k$.

\begin{lem}\label{ep}
i) Let $k$ and $l$ be positive integers such that $l\geq k$, then
$$ [1^k0]_q +\sum_{k+1\leq j \leq l} (q^j-1) < q^{l+1}-1.$$

ii) In particular, if $n$ can be written as $$ b=r_k+\sum_{j=k+1}^{\infty}\varepsilon_j (q^j-1), $$
where $r_k\in [[\bar{s}^k]_q,[1^k0]_q ]$, $ \varepsilon_j\in\{0,1\}$ not all $0$  and $\varepsilon_j=0$ for $j$ big enough, then
$$\max_{j\geq k+1}\{\varepsilon_j=1\}=  \max_{j\in\mathbb{N}}\{ q^j-1\leq b\}.$$
\end{lem}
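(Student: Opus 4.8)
The plan is to settle part i) by a direct computation with finite geometric sums, and then to read off part ii) from i) together with the two-sided bound $r_k\in[[\bar{s}^k]_q,[1^k0]_q]$.

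For part i), I would first put the left-hand side in closed form. Since $[1^k0]_q=q^k+q^{k-1}+\cdots+q=\frac{q^{k+1}-q}{q-1}$ and $\sum_{k+1\le j\le l}(q^j-1)=\frac{q^{l+1}-q^{k+1}}{q-1}-(l-k)$, the left-hand side equals $\frac{q^{l+1}-q}{q-1}-(l-k)$. Because $l\ge k$ the correction $-(l-k)$ is $\le 0$, so it suffices to prove $\frac{q^{l+1}-q}{q-1}<q^{l+1}-1$; and indeed $\frac{q^{l+1}-q}{q-1}=q^l+q^{l-1}+\cdots+q<q^l+q^{l-1}+\cdots+q+1=\frac{q^{l+1}-1}{q-1}\le q^{l+1}-1$, the last inequality holding since $q\ge 2$. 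This finishes i).

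For part ii), set $m:=\max\{j\ge k+1:\varepsilon_j=1\}$, which is well defined since the $\varepsilon_j$ are not all zero and vanish for large $j$. It is enough to show $q^m-1\le b<q^{m+1}-1$, for then $m=\max\{j\in\mathbb{N}:q^j-1\le b\}$, which is the asserted equality. The lower bound is immediate: the summand $q^m-1$ occurs (as $\varepsilon_m=1$), and the remaining summands together with $r_k\ge[\bar{s}^k]_q\ge 0$ are non-negative, so $b\ge q^m-1$. For the upper bound, use $\varepsilon_j\in\{0,1\}$, $\varepsilon_j=0$ for $j>m$, and $r_k\le[1^k0]_q$ to get $b\le[1^k0]_q+\sum_{k+1\le j\le m}(q^j-1)$, which is $<q^{m+1}-1$ by part i) applied with $l=m$ — admissible since $m\ge k+1\ge k$. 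Combined with \eqref{eq:dec} this also determines all $\varepsilon_j$ with $j$ large, and, peeling off $\varepsilon_m(q^m-1)$ and inducting downward, the entire decomposition, which is the uniqueness claimed just before the lemma.

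I do not anticipate a real obstacle here: everything is elementary. The only points requiring care are the bookkeeping of the geometric-series identities in i), the verification that the hypothesis $l\ge k$ is met when i) is invoked inside ii) (it is, since $m\ge k+1$), and the observation that $\frac{q^{l+1}-1}{q-1}\le q^{l+1}-1$ becomes an equality at $q=2$, so the strict inequality in i) must be extracted from the extra $+1$ rather than from that last step.
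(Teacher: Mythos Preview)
Your proof is correct and follows essentially the same approach as the paper: for i) you compute the same bound $[1^l0]_q<q^{l+1}-1$ via closed-form geometric sums where the paper uses base-$q$ digit concatenation, and for ii) you sandwich $b$ between $q^m-1$ and $q^{m+1}-1$ using i), which is exactly the paper's argument (the paper phrases the upper bound as a proof by contradiction but the content is identical). Your added remark on uniqueness by downward induction is also fine and matches what the paper asserts without proof just before the lemma.
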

\begin{proof}
i) \begin{align*}
  &[1^k0]_q +\sum_{k+1\leq j \leq l} (q^j-1)\\
  \leq &[1^k0]_q +\sum_{k+1\leq j \leq l} q^j\\
  =&[1^k0]_q +[1^{l-k}0^{k+1}]_q\\
  =&[1^l0]_q\\
  <&[1^{l+1}]_q
  \\\leq&q^{l+1}-1. 
 \end{align*}
ii) It is evident that $$J_1:=\max_{j\geq k+1}\{\varepsilon_j=1\}\leq  \max_{j\in\mathbb{N}}\{ q^j-1\leq b\}=:J_2.$$
Suppose that the inequality is strict. Then we would have
\begin{align*}
 b&=r_k+\sum_{j=k+1}^{J_1}\varepsilon_j (q^j-1)\\
&\leq [1^k0]+\sum_{j=k+1}^{J_1}(q^j-1)\\
&< q^{J_2}-1\\
&\leq b,
 \end{align*}
contradiction.

\end{proof}

For $b\in\mathbb{N}^*$, we can obtain all possible decompositions of $b$ of the form (\ref{eq:dec}) by applying repetitively Lemma \ref{ep}:

\begin{algorithm}[H]
\textbf{Input:} positive integer $b$\\
\textbf{Output:} finite sequence $(b)_n$ and a set $I$\\
$i:=1$;\\
$I:=\phi$\;
$b_1:=b$;\\
\If{$\exists l\in\mathbb{N}$ s.t. $[\bar{s}^l]_q\leq b_i\leq [1^l0]_q$}{
  add $i$ to $I$\;}
 \While{$\exists l\in\mathbb{N}^*$ s.t. $b_i\geq q^l-1$ }{
  $l_i:=\max\limits_{l\in\mathbb{N}^*}\{b\geq q^l-1\}$;\\

  \eIf{$b_i-(q^{l_i}-1)>[1^{l_i-1}0]_q$}{
   \textbf{end of procedure}\;
   }{
  $b_{i+1}:=b_i-(q^{l_i}-1)$\;
  $i++$\;
    \If{$\exists l\in\mathbb{N}$ s.t. $[\bar{s}^l]_q\leq b_i\leq [1^l0]_q$}{
  add $i$ to $I$\;}
  }
 }
\caption{Decomposition of $b$}
\end{algorithm}

Then all decompositions of $b$ in the form (\ref{eq:dec}) are $b_i+\sum\limits_{k=1}^{i-1}(b_k-b_{k+1})$ for $i\in I$.

As we are interested in the coefficients $u([1^m0^j]_q)$, we define $b_{j,m,1}=[1^m0^j]_q$ for $j,m\in\mathbb{N}^*$. And we define 
$b_{j,m,n}$ using
the procedure above with input $b_{j,m,1}$.

For example, for $j=2$ and $q=3$, the  base-$q$ expansion of $b_{j,m,n}$ is as follows, the symbol $*$ means that $b_{j,m,n}$ is not
defined:
\begin{center}
  \begin{tabular}{ l | c  r  r r r }
    \hline
     & 1 & 2 & 3 &4&5\\ \hline
    1 & 100 & 1 & *&* &*\\ 
    2 & 1100 & 101 & 2 &0 &*\\
    3&11100&1101&102& 10 &*\\
    4&111100&11101&1102&110&*\\
    \vdots&\vdots&\vdots&\vdots&\vdots&\vdots \\
   
  \end{tabular}
  \captionof{table}{$b_{2,m,n}$ for $q=3$}
\end{center}

We can observe some patterns from the table above, which we summarize in the following Lemma:

\begin{lem}\label{b}
For $j\geq 2$, the statement $P(n)$ is true for $1\leq n\leq q^{j-1}+1$ and the statements $Q(n)$ and $R(n)$ are true for $1\leq n\leq q^{j-1}$:\\
$P(n)$: For all $m\in\mathbb{N}^*$ and $m\geq n-1$, $b_{j,m,n}$ is defined and $b_{j,m+1,n}=b_{j,m,n}+q^{j+m+1-n}$.\\
$Q(n)$: For all $m\geq n$, $l_{j,m,n}:=\max\limits_{l\in\mathbb{N}^*}\{b_{j,m,n}\geq q^l -1\}=j+m-n$, 
and $b_{j,m,n}-(q^{l_{j,m,n}}-1)\leq [1^{l_{j,m,n}-1}0]_q$.
Thus $b_{j,m,n+1}=b_{j,m,n}-(q^{j+m-n}-1)$.\\
$R(n)$: For all $m\in\mathbb{N}^*$ and $m\geq n-1$, $b_{j,m+1,n+1}=b_{j,m,n}+1$.\\
\end{lem}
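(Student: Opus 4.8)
\emph{Plan.}
I would prove $P(n)$, $Q(n)$ and $R(n)$ simultaneously by induction on $n$, the real object of the induction being the closed form
$$ b_{j,m,n}=[1^{\,m-n+1}0^{\,j}]_q+(n-1)\qquad\bigl(m\in\mathbb{N}^*,\ m\ge n-1\bigr), $$
where we set $[1^{0}0^{\,j}]_q:=0$ to cover the endpoint $m=n-1$. Granting this identity at level $n$, statement $P(n)$ is immediate from $[1^{\,t+1}0^{\,j}]_q-[1^{\,t}0^{\,j}]_q=q^{\,j+t}$, and $R(n)$ follows formally once $Q(n)$ is also in hand, the additive constant cancelling:
$$ b_{j,m+1,n+1}\overset{Q(n)}{=}b_{j,m+1,n}-(q^{\,j+m+1-n}-1)\overset{P(n)}{=}b_{j,m,n}+q^{\,j+m+1-n}-(q^{\,j+m+1-n}-1)=b_{j,m,n}+1. $$
So the content of the induction lies in propagating the closed form from level $n-1$ to level $n$ and then reading off $Q(n)$ from it.

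The base case $n=1$ is the definition $b_{j,m,1}=[1^{m}0^{\,j}]_q$: its top nonzero digit sits in position $j+m-1$, so $q^{\,j+m-1}-1\le b_{j,m,1}<q^{\,j+m}-1$, hence $l_{j,m,1}=j+m-1$, while $b_{j,m,1}-(q^{\,j+m-1}-1)=[1^{\,m-1}0^{\,j}]_q+1\le[1^{\,j+m-2}0]_q$ because the two sides differ by $[1^{\,j-1}0]_q\ge q>1$ (the first place $j\ge2$ is used); thus $P(1)$ and $Q(1)$ hold, and $R(1)$ follows from them. For the inductive step I would feed the level-$(n-1)$ closed form into the identity $b_{j,m,n}=b_{j,m,n-1}-(q^{\,j+m-n+1}-1)$ supplied by $Q(n-1)$; since $q^{\,j+m-n+1}$ is the top power occurring in $[1^{\,m-n+2}0^{\,j}]_q$, this collapses to the closed form at level $n$ for every $m\ge n-1$, whereupon $P(n)$ drops out.

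The crux --- the step I expect to be the main obstacle, and the one that forces the exact ranges appearing in the statement --- is $Q(n)$, taken over all $m\ge n$. From the closed form, the summand $n-1$ occupies only base-$q$ positions $0,\dots,j-1$ while the block $1^{\,m-n+1}0^{\,j}$ occupies positions $j,\dots,j+m-n$, so there is no carrying: $b_{j,m,n}$ has digit $1$ in position $j+m-n$, and removing that digit gives $b_{j,m,n}-(q^{\,j+m-n}-1)=[1^{\,m-n}0^{\,j}]_q+n$. Two facts now have to be checked, both uniform in $m$: (i) $l_{j,m,n}=j+m-n$, which amounts to $b_{j,m,n}<q^{\,j+m-n+1}-1$ (the lower bound $b_{j,m,n}\ge q^{\,j+m-n}-1$ being automatic); after subtracting $q^{\,j+m-n}-1$ this reads $[1^{\,m-n}0^{\,j}]_q+n<q^{\,j+m-n}(q-1)$, which holds because $[1^{\,m-n}0^{\,j}]_q<q^{\,j+m-n}$ and $n\le q^{\,j-1}<q^{\,j}\le q^{\,j+m-n}(q-2)$ (here $q\ge3$ is used); and (ii) $[1^{\,m-n}0^{\,j}]_q+n\le[1^{\,j+m-n-1}0]_q$, which, since the two sides differ by $[1^{\,j-1}0]_q$, is equivalent to the single $m$-free inequality $n\le[1^{\,j-1}0]_q$, true for $n\le q^{\,j-1}$ because $[1^{\,j-1}0]_q\ge q^{\,j-1}$ when $j\ge2$. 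These two constraints pin down the ranges: $Q(n)$, and therefore $R(n)$, for $n\le q^{\,j-1}$, and $P(n)$ for $n\le q^{\,j-1}+1$ --- one level further, since $P(n)$ consumes only $Q(n-1)$ and the level-$(n-1)$ closed form, which are still available at $n-1=q^{\,j-1}$. The rest is routine bookkeeping: verifying that every lower-level invocation is made at an admissible index, and observing that the two assertions in $Q(n)$ are precisely the conditions under which the procedure defining $b_{j,m,n}$ produces $b_{j,m,n+1}$.
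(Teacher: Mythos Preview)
Your argument is correct and follows the same inductive skeleton as the paper (simultaneous induction on $n$ for $P$, $Q$, $R$), but you organize it more transparently by carrying the explicit closed form $b_{j,m,n}=[1^{\,m-n+1}0^{\,j}]_q+(n-1)$ through the induction. The paper instead leaves this identity implicit: it proves $Q(1)$ and $Q(n+1)$ each by a secondary induction on $m$, deriving the diagonal value $b_{j,n+1,n+1}=b_{j,1,1}+n$ from $R(1),\dots,R(n)$ and then pushing outward via $P(n+1)$. Your version collapses that inner induction into a single digit-pattern observation, which is cleaner and makes the role of the bound $n\le q^{\,j-1}$ visible in one line (your inequality $n\le[1^{\,j-1}0]_q$). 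The only place you are slightly less general than the paper is the appeal to $q\ge3$ in step~(i); the paper's estimate $b_{j,n,n}\le q^{j}+q^{\,j-1}-1<q^{\,j+1}-1$ works for all $q\ge2$, but since the ambient hypothesis $1<s<q$ already forces $q\ge3$, this costs you nothing here.
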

\begin{proof}
We prove by induction on $n$.

For $n=1$, $P(1)$ is true by definition of $b_{j,m,1}$.

To prove $Q(1)$ we use induction on $m$. First, 
$$l_{j,1,1}=\max\limits_{l\in\mathbb{N}^*} \{b_{j,1,1}\geq q^l-1\}=\max\limits_{l\in\mathbb{N}^*}\{[10^j]_1\geq q^l-1\}=j+1-1.$$
And 
$$b_{j,1,1}-(q^{l_{j,1,1}}-1)=1\leq [1^{l_{j,1,1}-1}0]_q.$$
Suppose that the statements are true for $m$, using $P(1)$ we have
$$l_{j,m+1,1}=\max\limits_{l\in\mathbb{N}^*} \{b_{j,m+1,1}\geq q^l-1\}=\max\limits_{l\in\mathbb{N}^*} \{b_{j,m,1}+q^{j+m+1-1}\geq q^l-1\}
=l_{j,m,1}+1=j+m+1-1,$$
and
\begin{align*}
&b_{j,m+1,1}-(q^{j+m+1-1}-1)\\
=&(b_{j,m,1}+q^{j+m+1-1})-q^{j+m+1-1}+1\\
=&b_{j,m,1}+1\\
=&b_{j,m,1}-(q^{j+m-1}-1)+q^{j+m-1}\\
\leq &[1^{j+m-1-1}0]_q+q^{j+m-1}\\
=&[1^{j+m-1}0]_q,
 \end{align*}
which proves $Q(1)$.

From $P(1)$ and $Q(1)$ follows $R(1)$.

Suppose that for $n<q^{j-1}$, we have proven $P(n')$, $Q(n')$ and $R(n')$ for all $n'\in\{1,...,n\}$. Let us prove $P(n+1)$, $Q(n+1)$ and 
$R(n+1)$.

First, $P(n+1)$ can be deduced immediately from $P(n)$ and $R(n)$.

For $Q(n+1)$, we prove by induction on $m\geq n+1$. By $R(1),...,R(n)$ we have $b_{j,n+1,n+1}=b_{j,1,1}+n\leq q^j+q^{j-1}-1$. Therefore
$$l_{j,n+1,n+1}=\max\limits_{l\in\mathbb{N}^*} \{b_{j,n+1,n+1}\geq q^l-1\}\leq
\max\limits_{l\in\mathbb{N}^*} \{ q^j+q^{j-1}-1\geq q^l-1\}=j,$$
on the other hand,
$$l_{j,n+1,n+1}=\max\limits_{l\in\mathbb{N}^*} \{b_{j,n+1,n+1}\geq q^l-1\}\geq 
\max\limits_{l\in\mathbb{N}^*} \{q^j\geq q^l-1\}=j.$$
Therefore $l_{j,n+1,n+1}=j=j+(n+1)-(n+1)$.
Besides,
$$b_{j,n+1,n+1}-(q^{l_{j,n+1,n+1}}-1)\leq q^j+q^{j-1}-1-(q^j-1)=q^{j-1}\leq [1^{j-1}0]_q,$$
which proves $Q(n+1)$.
From $P(n+1)$ and $Q(n+1)$ follows $R(n+1)$.

Finally, $P(q^{j-1}+1)$ can be deduced from $Q(q^{j-1})$ and $R(q^{j-1})$.
\end{proof}

\begin{coro}\label{L:s}
For $j\geq 2$, $1\leq n\leq q^{j-1}+1$ and $m\geq n$, $b_{j,m,n}\in \{[\bar{s}^{j+m-n}]_q,...,[1^{j+m-n}0]_q\}$.\\
\end{coro}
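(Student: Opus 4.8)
The plan is to track, through the inductive description of $b_{j,m,n}$ provided by Lemma \ref{b}, the base-$q$ expansion of $b_{j,m,n}$ and show it always lands in the prescribed interval $[[\bar s^{j+m-n}]_q,[1^{j+m-n}0]_q]$. The key observation is that Corollary \ref{L:s} is exactly the statement that in the \emph{Decomposition of $b$} algorithm applied to $b_{j,m,1}=[1^m0^j]_q$, every index $n$ in the range $1\le n\le q^{j-1}+1$ with $m\ge n$ belongs to the output set $I$; equivalently, $b_{j,m,n}$ has $j+m-n$ base-$q$ digits, its leading digit is between $\bar s$ and $1$ (so it is $1$ once $j+m-n\ge 2$, and lies in $\{\bar s,\dots,q-1\}$ when $j+m-n=1$), and its last digit is $0$ (again for $j+m-n\ge 1$, with the one-digit case handled separately).

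First I would set $N=j+m-n$ and prove the claim by induction on $n$, mirroring the structure of Lemma \ref{b}. For the base case $n=1$ we have $b_{j,m,1}=[1^m0^j]_q$; since $m\ge 1$ its base-$q$ expansion is $1^m0^j$, which has $j+m-1=N$ digits, leading digit $1\in[\bar s,\dots,q-1]$ (here $1\le\bar s=q-s$ because $s<q$, wait—rather $1\le \bar s$ holds since $s\le q-1$), and trailing digit $0$; hence $b_{j,m,1}\in[[\bar s^{N}]_q,[1^{N}0]_q]$. For the inductive step, assuming the statement holds at level $n$, the recursion $Q(n)$ of Lemma \ref{b} gives $b_{j,m,n+1}=b_{j,m,n}-(q^{j+m-n}-1)$ for $m\ge n+1$. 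Writing the base-$q$ expansion of $b_{j,m,n}$ as $1w0$ with $w$ a word of length $j+m-n-2\ge 0$ (using the inductive hypothesis: leading $1$, trailing $0$, total length $j+m-n$), one computes $b_{j,m,n}-(q^{j+m-n}-1)$ directly: subtracting $q^{j+m-n}-1 = \underbrace{1\cdots1}_{j+m-n}$ from $1w0$ yields a word of length $j+m-n-1 = j+m-(n+1) = N'$, and the arithmetic of this subtraction (which is where the bound $b_{j,m,n}-(q^{j+m-n}-1)\le[1^{j+m-n-1}0]_q$ from $Q(n)$ enters) forces the result to have leading digit $1$ (or, in the degenerate case $N'=1$, leading digit in $\{\bar s,\dots,q-1\}$) and trailing digit $0$, hence to lie in $[[\bar s^{N'}]_q,[1^{N'}0]_q]$.

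The main obstacle, and the only place requiring genuine care, is the boundary behaviour when $j+m-n$ is small — specifically the transition into the one-digit and two-digit regimes, where "leading digit $\in[\bar s,q-1]$ and trailing digit $0$" partly coincide or the interval $[[\bar s^{N}]_q,[1^N0]_q]$ degenerates (for $N=1$ this is $\{\bar s,\dots,q-1\}$; for $N=2$ it is $\{\bar s q,\dots, q^2-q\}$). Here I would invoke the explicit values extracted inside the proof of Lemma \ref{b}, namely $b_{j,n+1,n+1}=b_{j,1,1}+n\le q^j+q^{j-1}-1$ and $b_{j,n+1,n+1}-(q^{j}-1)\le q^{j-1}\le[1^{j-1}0]_q$, together with $b_{j,n+1,n+1}\ge q^j$, to pin down $b_{j,n+1,n+1}\in[[\bar s^{j-1}]_q,[1^{j-1}0]_q]$ for the minimal admissible $m=n+1$; the constraint $n\le q^{j-1}$ is precisely what keeps $b_{j,n+1,n+1}=q^j+(n-1)<q^j+q^{j-1}$ inside the claimed range. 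Once these edge indices are verified, $P(n)$ of Lemma \ref{b} (which adds a single power $q^{j+m+1-n}$ to pass from $m$ to $m+1$, i.e. prepends/adjusts a leading $1$) propagates the membership to all larger $m$, completing the induction and hence the corollary.
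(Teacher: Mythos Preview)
Your high-level plan---induction on $n$ via the recursions $P(n)$, $Q(n)$, $R(n)$ of Lemma~\ref{b}---is the right one and is essentially what the paper intends. But the concrete inductive hypothesis you set up is false, so the induction as written cannot run. You assert that $b_{j,m,n}$ has base-$q$ expansion of the shape $1w0$ with total length $j+m-n$. Already for $n=1$ the length is $j+m$, not $j+m-1$; more damagingly, the trailing digit is \emph{not} $0$ once $n\ge 2$. In the paper's own Table~1 ($j=2$, $q=3$) one reads $b_{2,2,2}=[101]_3$ and $b_{2,3,3}=[102]_3$, and in general iterating $R$ gives $b_{j,n,n}=q^j+(n-1)$, whose last base-$q$ digit is $(n-1)\bmod q$. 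You also write $q^{j+m-n}-1=\underbrace{1\cdots 1}_{j+m-n}$ in base $q$, but that number is $(q-1)$ repeated $j+m-n$ times. So the ``subtract $1w0$ minus $1\cdots1$'' computation does not describe the actual arithmetic, and the conclusion ``leading digit $1$, trailing digit $0$'' you feed back into the induction is simply not true.

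The repair is shorter than the digit-tracking you attempt: both bounds can be read straight off $Q$. For $2\le n\le q^{j-1}+1$, the second clause of $Q(n-1)$ is literally the upper bound $b_{j,m,n}\le[1^{j+m-n}0]_q$; for $n=1$, $[1^m0^j]_q\le[1^{j+m-1}0]_q$ is immediate. For $1\le n\le q^{j-1}$, the first clause of $Q(n)$ says $l_{j,m,n}=j+m-n$, hence $b_{j,m,n}\ge q^{j+m-n}-1$; since $\bar s=q-s\le q-2$ (as $s\ge 2$) one has $[\bar s^{\,j+m-n}]_q\le(q-2)\frac{q^{j+m-n}-1}{q-1}<q^{j+m-n}-1$, giving the lower bound. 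The single remaining case $n=q^{j-1}+1$ follows by iterating $R$ down to $b_{j,m,n}=[1^{m-q^{j-1}}0^j]_q+q^{j-1}\ge q^{j+m-n}>[\bar s^{\,j+m-n}]_q$. No digit-pattern invariant is needed.
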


Now we look at the table of $b_{j,m,n}$ for $j=4$ and $q=3$.

\textcolor{red}
\tiny{}
\begin{center}
\tiny{}
  \begin{tabular}{ l | c  r  r r r r r r r r r r r r}
    \hline
     & 1 & 2 & 3 &4&5&6&7&8&9&10&11&12&13&14\\ \hline
    1 & 1000 & 1 & *&* &*&*&*&*&*&*&*&*&*&*\\ 
    2 & 11000 & 1001 & 2 &0 &*&*&*&*&*&*&*&*&*&*\\
    3&111000  &11001 &1002& 10 &1&*&*&*&*&*&*&*&*&*\\
    4& .  &111001&11002&1010&11&2&*&*&*&*&*&*&*&*\\
    5& . & .  &111002&11010&1011&12&10&*&*&* &*&*&*&*\\
    6& . & . & . &111010&11011&1012&20&11&*&*&*&*&*&*\\
    7&.  &.  &.  &.     &111011&11012&1020&21&12&*&*&*&*&*\\
    8&.  &.  &.  &.     &.     &111012&11020&1021&22&0&*&*&*&*\\
    9&.  &.  &.  &.     &.     &.     &111020&11021&1022&100&1&*&*&*\\
    10&.  &.  &.  &.     &.     &.     &.     &111021&11022&1100&101&2&0&*\\
    11&.  &.  &.  &.     &.     &.     &.     &.     &111022&11100&1101&102&10&*\\
    12&.  &.  &.  &.     &.     &.     &.     &.     &     .&111100&11101&1102&110&*\\
    13&.  &.  &.  &.     &.     &.     &.     &.     &     .&.&111101&11102&1110&*\\
  \end{tabular}
  \captionof{table}{$b_{3,m,n}$ for $q=3$}
\end{center}

\normalsize{}
We notice that starting from $m=9$ and $n=10$, the subtable is the same as that of $j=2$ and $q=3$. It is the case in general that the table of $b_{j,m,n}$ occurs at the end of the table of $b_{j+1,m,n}$.
\begin{lem}\label{L:end}
 For $j,m,n\in\mathbb{N}^*$, $n\geq q^j+1$ and $m\geq q^j$,
$b_{j+1,m,n}$ is defined if and only if $b_{j,m-q^j+1,n-q^j}$ is defined. When they are defined they have the same value.
\end{lem}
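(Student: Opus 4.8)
The plan is to argue by induction on $n$, with base case $n = q^j+1$, and then to exploit that the decomposition procedure (Algorithm~1) is \emph{memoryless}: the passage from $b_i$ to $b_{i+1}$, and the decision of whether to halt, is a function of the value $b_i$ alone, independent of $i$ and of the earlier history. Granting this, the whole lemma collapses to matching the two sequences at the single index $n - q^j = 1$, after which one extra run of the procedure on each side propagates everything.

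\emph{Base case.} First I would invoke Lemma~\ref{b} with $j$ replaced by $j+1$, which is legitimate since $j+1 \ge 2$; it supplies $R(n)$ for $1 \le n \le q^j$ and $P(n)$ for $1 \le n \le q^j+1$ — exactly up to the index we need. Straight from the definitions, $b_{j+1,1,1} = [10^{j+1}]_q = q^{j+1}$, hence $l_{j+1,1,1} = j+1$ and $b_{j+1,1,2} = q^{j+1}-(q^{j+1}-1) = 1$. Telescoping the relations $R(2),\ldots,R(q^j)$, each applied with $m = n-1$, then gives $b_{j+1,q^j,q^j+1} = 1 + (q^j-1) = q^j = [10^j]_q = b_{j,1,1}$. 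Finally $P(q^j+1)$ reads $b_{j+1,m+1,q^j+1} = b_{j+1,m,q^j+1} + q^{\,j+m+1-q^j}$ for $m \ge q^j$, and $q^{\,j+m+1-q^j}$ is precisely $[1^{m-q^j+2}0^j]_q - [1^{m-q^j+1}0^j]_q$; so an induction on $m$ yields $b_{j+1,m,q^j+1} = [1^{m-q^j+1}0^j]_q = b_{j,m-q^j+1,1}$ for every $m \ge q^j$, both sides being defined.

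\emph{Inductive step.} Suppose that for some $n \ge q^j+1$ and all $m \ge q^j$ it is already known that $b_{j+1,m,n}$ is defined if and only if $b_{j,m-q^j+1,n-q^j}$ is, and that the two coincide when defined. Applying one more iteration of Algorithm~1 to each of the two procedures that produced these values, and using that this iteration depends only on the current value, we get the same equivalence and the same equality at index $n+1$ on the left and $n-q^j+1$ on the right; the cases where a procedure halts (so the next value is undefined) are handled identically, since the halting test is again value-only. As $n - q^j \ge 1$ throughout, this closes the induction and proves the lemma.

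\emph{Main obstacle.} The real work is in the base case, specifically two bookkeeping checks: first, that the $m$-increment $q^{\,j+m+1-q^j}$ produced by $P$ with parameter $j+1$ is exactly $[1^{m-q^j+2}0^j]_q - [1^{m-q^j+1}0^j]_q$; and second — more delicately — that the ranges over which Lemma~\ref{b} guarantees $R$ (for $1 \le n \le q^j$) and $P$ (for $1 \le n \le q^j+1$) with parameter $j+1$ are just wide enough to reach the indices $n = q^j$ and $n = q^j+1$ used above. The fit is exact at both endpoints, so the argument genuinely needs the full strength of Lemma~\ref{b} here; once these checks are in place, the inductive step costs nothing, precisely because the algorithm carries no state beyond the current value $b_i$.
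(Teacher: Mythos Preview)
Your proof is correct and follows essentially the same route as the paper: both establish the base case $b_{j+1,m,q^j+1}=b_{j,m-q^j+1,1}$ by computing $b_{j+1,q^j,q^j+1}=q^j$ via the diagonal relations $R(n)$ and then propagating in $m$ via $P(q^j+1)$, exactly as you do. The only difference is cosmetic: the paper separately verifies the second column identity $b_{j+1,m,q^j+2}=b_{j,m-q^j+1,2}$ and then asserts ``the first two columns determine the rest,'' whereas you observe directly that the algorithm is memoryless, so matching one column already forces all later ones (including the halting behavior)---your formulation is slightly cleaner but not a different argument.
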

\begin{proof}
By the definition of $b_{j,m,n}$, the first two columns determine the rest of the table. Therefore we only need to prove that
for all $m\geq q^j$,
\begin{equation} b_{j+1,m,q^j+1}=b_{j,m-q^j+1,1}\label{eq:1}\end{equation}
and                                                
\begin{equation}b_{j+1,m,q^j+2}=b_{j,m-q^j+1,2}. \label{eq:2}\end{equation}
By applying Lemma \ref{b} we have
$$b_{j+1,q^j,q^j+1}=b_{j+1, 1,2}+q^j-1=q^j=b_{j,1,1},$$
and for $k\in\mathbb{N}$,
$$b_{j+1, q^j+k+1, q^j+1}-b_{j+1,q^j+k,q^j+1}=q^{j+k+2}.$$
Thus for $m\geq q^j $ $$b_{j+1,m,q^j+1}=[1^{m+1-q^j}0^j]_q=b_{j,m+1-q^j,1}$$
which proves (\ref{eq:1}).

For $m\geq q^j$, accroding to the second point Lemma \ref{b},
$$b_{j+1,m,q^j+1}-b_{j+1,m,q^j+2}=q^{j+1+m-(q^j+1)}-1=q^{j+(m-q^j+1)-1}=b_{j,m-q^j+1,1}-b_{j,m-q^j+1,2}.$$
which proves (\ref{eq:2}).
\end{proof}

To calculate the coefficient of $T^{-n}$ in $S_k$, we define $k_{j,m,n}$ and $c_{j,m,n}$ as follows:
When $b_{j,m,n}$ is defined and there exists $k\in\mathbb{N}$ such that $b_{j,m,n}\in \{[\bar{s}^k]_q,...,[1^k0]_q\}$, 
$k_{j,m,n}$ is defined to be $k$. Otherwise $k_{j,m,n}$ is not defined. When $k_{j,m,n}$ is defined, $c_{j,m,n}$ is defined to be
$b_{j,m,n}-[\bar{s}^{k_{j,m,n}}]_q$.
Finally we define $N_{j,m}$ to be $\{n\in \mathbb{N}^* \mbox{ such that } c_{j,m,n} \mbox{ is defined}\}$.
From the expression ($*$) we see that:

\begin{lem}
 For $j,m\in \mathbb{N}^*$, 
 $$\left[T^{[1^m0^j]_q}\right]\frac{\alpha}{\Pi}L(1,\chi_s)=\sum_{n\in N_{j,m}} (-1)^{k_{j,m,n}(s-1)}\binom{\left[s^{k_{j,m,n}}\right]_q}
 {c_{j,m,n}} (-a)^ {c_{j,m,n}} \cdot(-1)^{n-1}.$$
\end{lem}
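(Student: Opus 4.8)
The plan is to expand the $k$-th summand $S_k$ of $(*)$, read off which powers $T^{-b}$ occur in it and with what coefficient, and then, specialising to $b=[1^m0^j]_q$, to translate that combinatorial data into the quantities $b_{j,m,n},k_{j,m,n},c_{j,m,n},N_{j,m}$ that have already been set up.

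First I would rewrite $S_k$ using $(q-s)\tfrac{q^k-1}{q-1}=[\bar{s}^k]_q$ and $s\tfrac{q^k-1}{q-1}=[s^k]_q$, so that
$$S_k=(-1)^{k(s-1)}\,T^{-[\bar{s}^k]_q}\left(1-\frac{a}{T}\right)^{[s^k]_q}\prod_{j=k+1}^{\infty}\left(1-T^{-(q^j-1)}\right),$$
and then expand the last two factors as
$$\left(1-\frac{a}{T}\right)^{[s^k]_q}=\sum_{c\geq 0}\binom{[s^k]_q}{c}(-a)^cT^{-c},\qquad\prod_{j=k+1}^{\infty}\left(1-T^{-(q^j-1)}\right)=\sum_{\substack{S\subseteq\{k+1,k+2,\ldots\}\\|S|<\infty}}(-1)^{|S|}T^{-\sum_{j\in S}(q^j-1)}.$$
Multiplying these out gives $[T^{-b}]S_k=(-1)^{k(s-1)}\sum_{(c,S)}(-1)^{|S|}\binom{[s^k]_q}{c}(-a)^c$, the sum running over all pairs $(c,S)$ with $c\in\mathbb{N}$, $S$ a finite subset of $\{k+1,k+2,\ldots\}$, and $b=[\bar{s}^k]_q+c+\sum_{j\in S}(q^j-1)$. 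Writing $r_k:=[\bar{s}^k]_q+c$ and using $[\bar{s}^k]_q+[s^k]_q=[1^k0]_q$, the condition $c\leq[s^k]_q$ (without which the binomial coefficient vanishes) is exactly $r_k\in\big[[\bar{s}^k]_q,[1^k0]_q\big]$; hence these pairs $(c,S)$ are precisely the decompositions of $b$ of the form $(\ref{eq:dec})$ whose $r_k$-part lies at level $k$.

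Now I would take $b=b_{j,m,1}=[1^m0^j]_q$. By the description of the output of the decomposition algorithm, the decompositions of $b$ of the form $(\ref{eq:dec})$ are exactly $b_{j,m,n}+\sum_{i=1}^{n-1}(b_{j,m,i}-b_{j,m,i+1})$ for $n\in N_{j,m}$; for such $n$ the level is $k=k_{j,m,n}$, the $r_k$-part equals $b_{j,m,n}$ so that $c=b_{j,m,n}-[\bar{s}^{k_{j,m,n}}]_q=c_{j,m,n}$, and $S$ is the set of the $n-1$ exponents removed in the first $n-1$ steps of the algorithm, so $|S|=n-1$. Summing $[T^{-b}]S_k$ over all $k\geq 0$ thus reorganises into a sum over all decompositions of $b$, i.e.\ over $n\in N_{j,m}$, and yields
$$\left[T^{-[1^m0^j]_q}\right]\frac{\alpha}{\Pi}L(1,\chi_s)=\sum_{n\in N_{j,m}}(-1)^{k_{j,m,n}(s-1)}\binom{[s^{k_{j,m,n}}]_q}{c_{j,m,n}}(-a)^{c_{j,m,n}}(-1)^{n-1},$$
which is the claimed identity.

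The only genuinely delicate point is the step in the last paragraph: one must know that the greedy subtraction carried out by the decomposition algorithm enumerates every decomposition of $b$ of the form $(\ref{eq:dec})$ exactly once, and that the level attached to its $n$-th output is $k_{j,m,n}$ with the exponents removed so far all strictly larger than that level (so that indeed $S\subseteq\{k+1,k+2,\ldots\}$). This is the content of Lemma~\ref{ep}(ii) together with the stopping rule of the algorithm: Lemma~\ref{ep}(ii) forces the largest term $q^j-1$ occurring in any decomposition, hence, applied repeatedly, pins the whole decomposition down once its level is fixed, while the stopping test $b_i-(q^{l_i}-1)>[1^{l_i-1}0]_q$ is precisely what prevents the procedure from either skipping or revisiting a valid level. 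Once that correspondence is in hand, the rest is the routine sign- and binomial-coefficient bookkeeping indicated above.
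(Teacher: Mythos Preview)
Your proof is correct and follows exactly the route the paper intends: the paper itself offers no proof beyond the sentence ``From the expression $(*)$ we see that'', treating the lemma as an immediate consequence of the expansion of each $S_k$ and of the fact (stated just after the algorithm) that the decompositions of $b$ of the form~(\ref{eq:dec}) are precisely the $b_i+\sum_{k=1}^{i-1}(b_k-b_{k+1})$ for $i\in I$. You have simply spelled out that computation and the bijection between decompositions and indices $n\in N_{j,m}$, including the check (via Lemma~\ref{ep} and the stopping rule) that each decomposition is hit exactly once and that the removed exponents all lie above the level $k_{j,m,n}$.
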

For $n\in N_{j,m}$, we denote by $d_{j,m,n}$ the quantity $(-1)^{k_{j,m,n}(s-1)}\binom{\left[s^{k_{j,m,n}}\right]_q}
 {c_{j,m,n}} (-a)^ {c_{j,m,n}} \cdot(-1)^{n-1}$. For other $n\in\mathbb{N}^*$, we define $d_{j,m,n}$ to be $0$ for convenience.

In order to calculate the coefficients we need the following Theorem:

\begin{thm}[Lucas]\label{lucas}
 Let $p$ be a prime number and $q=p^k$ for $k\in\mathbb{N}^*$. Let $m=\sum\limits_i m_i q^i$, $n=\sum\limits_j n_j q^j$ be two integers, where 
 $m_i, n_j\in \{0,1,...,q-1\}$. Then
 $$\binom{m}{n}\equiv \prod\limits_{i}\binom{m_i}{n_i}\;\mod p.$$
\end{thm}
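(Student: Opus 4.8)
The plan is to prove this the classical way, via a generating-function computation in the polynomial ring $\mathbb{F}_p[x]$. The key input is the Frobenius identity: since $a\mapsto a^p$ is a ring endomorphism of $\mathbb{F}_p[x]$ (equivalently, $\binom{p}{i}\equiv 0\pmod p$ for $0<i<p$), one has $(1+x)^p\equiv 1+x^p\pmod p$, and iterating $k$ times gives $(1+x)^q=(1+x)^{p^k}\equiv 1+x^q\pmod p$. The same induction, applied to $y=x^{q^i}$, yields $(1+x)^{q^i}\equiv 1+x^{q^i}\pmod p$ for every $i\ge 0$.

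Next I would expand $(1+x)^m$ along the base-$q$ expansion $m=\sum_i m_i q^i$:
$$(1+x)^m=\prod_i\bigl((1+x)^{q^i}\bigr)^{m_i}\equiv\prod_i(1+x^{q^i})^{m_i}=\prod_i\sum_{j_i=0}^{m_i}\binom{m_i}{j_i}x^{j_i q^i}\pmod p,$$
and then compare the coefficient of $x^n$ on both sides. On the left it is $\binom{m}{n}$. On the right, a contribution to $x^n$ comes from a tuple $(j_i)$ with $\sum_i j_i q^i=n$ and $0\le j_i\le m_i$; since $m_i\le q-1$, each such tuple is a string of base-$q$ digits, so by uniqueness of the base-$q$ representation $n=\sum_i n_i q^i$ (which uses $0\le n_i<q$) the only admissible tuple is $j_i=n_i$ for all $i$. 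Hence the coefficient of $x^n$ on the right is $\prod_i\binom{m_i}{n_i}$, which gives the asserted congruence. When $n_i>m_i$ for some $i$, both sides vanish: the factor $\binom{m_i}{n_i}$ is zero, and on the left there is simply no admissible choice of $j_i$.

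I do not expect any genuine obstacle here. The one point that deserves a sentence rather than silence is the role of the hypotheses $m_i,n_i<q$: this is exactly what turns the exponent tuples appearing in the product into bona fide base-$q$ digit strings, and hence what forces $j_i=n_i$. Everything else is a mechanical comparison of coefficients in $\mathbb{F}_p[x]$, so the write-up will be only a few lines.
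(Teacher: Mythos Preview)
Your argument is the standard generating-function proof of Lucas' theorem and is correct as written; the only subtlety---that the digit bounds $0\le j_i\le m_i\le q-1$ force the decomposition $\sum_i j_i q^i=n$ to coincide with the base-$q$ expansion of $n$---is exactly the point you flagged. The paper itself does not prove this theorem: it is stated as a classical result (attributed to Lucas) and used as a black box in the computation of the $d_{j,m,n}$, so there is nothing to compare your approach against.
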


Let us look at an example of $c_{j,m,n}$ and $d_{j,m,n}$ with $j=2$, $q=3$ and $s=2$:

\begin{center}
  \begin{tabular}{ l | c  r  r r r }
    \hline
     & 1 & 2 & 3 &4&5\\ \hline
    1 & 12 & 0 & *&* &*\\ 
    2 & 212 & 20 & 1 &0&*\\
    3&2212&220&  21   & 2 &*\\
    4&22212&2220&221&22&*\\
    \vdots&\vdots&\vdots&\vdots&\vdots&\vdots \\
   
  \end{tabular}
  \captionof{table}{$c_{2,m,n}$ for $q=3$ and $s=2$}
\end{center}

\begin{center}
  \begin{tabular}{ l | c  r  r r r }
    \hline
     & 1 & 2 & 3 &4&5\\ \hline
    1 & $2(-a)^3$ & 1 & 0&0 &0\\ 
    2 & $-2(-a)^5$ & $-(-a)^2$ & $-2(-a)$ &-1&0\\
    3&$2(-a)^7$&$(-a)^4$&  $2(-a)^3$   & $(-a)^2$ &0\\
    4&$-2(-a)^9$&$-(-a)^6$&$-2(-a)^5$&$-(-a)^4$&0\\
     5&$2(-a)^{11}$&$(-a)^8$&$2(-a)^7$&$(-a)^6$&0\\
    \vdots&\vdots&\vdots&\vdots&\vdots&\vdots \\
   
  \end{tabular}
  \captionof{table}{$d_{2,m,n}$ for $q=3$ and $s=2$}
\end{center}

From the table we observe that $(d_{j,m,n})_{m\geq n}$ seems to be periodic. Indeed, we have:
\begin{lem}\label{periodic}
 For $j\in\mathbb{N}^*$ and $1\leq n\leq q^{j-1}+1$, the sequence $(d_{j,m,n})_{m\geq n}$ is periodic.
\end{lem}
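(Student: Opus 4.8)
The plan is to write down, for fixed $j$ and $n$ in the stated range, an explicit formula for $d_{j,m,n}$ as a function of $m$ and read periodicity off it. The heart of the matter is the claim that, for $j\ge 2$, the base-$q$ expansion of $c_{j,m,n}$ is a fixed word of length $j$ (depending only on $j,n$) followed by $m-n$ copies of the digit $s$.

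First I would treat $j\ge 2$. Combining statement $P(n)$ of Lemma \ref{b}, which gives $b_{j,m+1,n}=b_{j,m,n}+q^{j+m+1-n}$ for $m\ge n-1$, with Corollary \ref{L:s}, which forces $k_{j,m,n}=j+m-n$ for $m\ge n$ and hence $c_{j,m,n}=b_{j,m,n}-[\bar{s}^{j+m-n}]_q$, yields the recursion
\[
c_{j,m+1,n}=c_{j,m,n}+q^{j+m+1-n}-\bigl([\bar{s}^{j+m+1-n}]_q-[\bar{s}^{j+m-n}]_q\bigr)=c_{j,m,n}+(q-\bar{s})q^{j+m-n}=c_{j,m,n}+sq^{j+m-n},
\]
so that $c_{j,m,n}=c_{j,n,n}+q^{j}[s^{m-n}]_q$ for all $m\ge n$. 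Next I would pin down the base case: the $R$-statements of Lemma \ref{b} give $b_{j,n,n}=b_{j,1,1}+(n-1)=q^{j}+n-1$, hence $c_{j,n,n}=q^{j}+n-1-[\bar{s}^{j}]_q$, and a direct estimate using $1\le\bar{s}=q-s\le q-2$ and $n\le q^{j-1}+1$ gives $0<c_{j,n,n}<q^{j}$. So $c_{j,m,n}$ does have the announced digit pattern, with no carry, and in particular $c_{j,m,n}<q^{j+m-n}$.

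Now I would evaluate the three factors of $d_{j,m,n}$ in turn. Since every base-$q$ digit of $[s^{j+m-n}]_q$ equals $s$, Lucas' Theorem (Theorem \ref{lucas}) gives $\binom{[s^{j+m-n}]_q}{c_{j,m,n}}\equiv\bigl(\prod_{i<j}\binom{s}{w_i}\bigr)\binom{s}{s}^{m-n}\pmod p$, where $w_0\cdots w_{j-1}$ denotes the length-$j$ expansion of $c_{j,n,n}$; this is independent of $m$. For the power of $-a$ I would use that $x^{q}=x$ on $\mathbb{F}_q$, so $(-a)^{c}=(-a)^{\sigma(c)}$ where $\sigma(c)$ is the base-$q$ digit sum of $c$; since $\sigma(c_{j,m,n})=\sigma(c_{j,n,n})+(m-n)s$, this factor is a constant times $\bigl((-a)^{s}\bigr)^{m-n}$, periodic of period dividing $q-1$ if $a\neq 0$ and identically $0$ if $a=0$ (as $c_{j,m,n}>0$). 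Finally $(-1)^{k_{j,m,n}(s-1)}(-1)^{n-1}=(-1)^{(j+m-n)(s-1)+(n-1)}$ is periodic in $m$ of period dividing $2$. Multiplying, $d_{j,m,n}$ equals a constant depending only on $j,n$, times $(-1)^{(j+m-n)(s-1)}\bigl((-a)^{s}\bigr)^{m-n}$, so $(d_{j,m,n})_{m\ge n}$ is periodic, of period dividing $\operatorname{lcm}(2,q-1)$.

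It remains to dispose of $j=1$, where $n\in\{1,2\}$. Running the decomposition algorithm on $b_{1,m,1}=[1^m0]_q$ shows, using $q\ge 3$ (forced by $1<s<q$), that it halts immediately with output index set $\{1\}$; hence $N_{1,m}=\{1\}$ and $d_{1,m,2}=0$ for all $m$. For $n=1$ one computes $c_{1,m,1}=[1^m0]_q-[\bar{s}^{m}]_q=(q-\bar{s})\frac{q^{m}-1}{q-1}=[s^{m}]_q$, whence $k_{1,m,1}=m$, $\binom{[s^{m}]_q}{[s^{m}]_q}=1$ and $(-a)^{[s^{m}]_q}=\bigl((-a)^{s}\bigr)^{m}$, so $d_{1,m,1}=(-1)^{m(s-1)}\bigl((-a)^{s}\bigr)^{m}=\bigl((-1)^{s-1}(-a)^{s}\bigr)^{m}$ is a geometric sequence with ratio in $\mathbb{F}_q$, hence periodic. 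I expect the main obstacle to be the structural claim of the second paragraph, namely $c_{j,m,n}=c_{j,n,n}+q^{j}[s^{m-n}]_q$ together with $0<c_{j,n,n}<q^{j}$: it is precisely this no-carry pattern that makes Lucas' Theorem output an $m$-independent binomial coefficient, after which the computation with Frobenius and signs is routine; the one delicate point is the boundary value $n=q^{j-1}+1$, which is still covered by $P(q^{j-1}+1)$ in Lemma \ref{b} and by Corollary \ref{L:s}.
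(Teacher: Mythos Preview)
Your argument is correct and is essentially the paper's proof unpacked: both rest on the recursion $c_{j,m+1,n}=c_{j,m,n}+s\,q^{j+m-n}$ together with Lucas' theorem and Frobenius, which the paper condenses into the single identity $d_{j,m+1,n}=(-1)^{s-1}(-a)^{s}\,d_{j,m,n}$, while you iterate to a closed form and analyse each factor separately, arriving at the same geometric-progression structure. Your separate treatment of $j=1$ is in fact more careful than the paper, whose invoked Corollary~\ref{L:s} and Lemma~\ref{b} are stated only for $j\ge 2$.
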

\begin{proof}
Throughout this proof we suppose that $1\leq n\leq q^{j-1}+1$ and $m\geq n$. 

 First, we know from Corollary \ref{L:s} that $k_{j,m,n}$ and thus $d_{j,m,n}$, are defined and $k_{j,m,n}=j+m-n$. 
 From Lemma \ref{b} we see that 
 $$0<c_{j,m,n}\leq q^{j+m-n}-1$$
 and
 $$c_{j,m+1,n}=c_{j,m,n}+s\cdot q^{j+m-n}.$$
 Therefore 
\begin{align*}
 d_{j,m+1,n}&=(-1)^{k_{j,m+1,n}(s-1)}\binom{\left[s^{k_{j,m+1,n}}\right]_q}{c_{j,m+1,n}} (-a)^ {c_{j,m+1,n}} \cdot(-1)^{n-1}\\
 &=(-1)^{(j+m+1-n)(s-1)}\binom{\left[s^{j+m+1-n}\right]_q}
 {s\cdot q^{j+m-n}+c_{j,m,n}} (-a)^ {s\cdot q^{j+m-n} +c_{j,m,n}} \cdot(-1)^{n-1}\\
 &=(-1)^{s-1}(-1)^{k_{j,m,n}(s-1)}\binom{s}{s} \binom{\left[s^{k_{j,m,n}}\right]_q}{c_{j,m,n}} (-a)^{s\cdot q^{j+m-n}} (-a)^ {c_{j,m,n}}
 \cdot (-1)^{n-1}\\
 &=(-1)^{s-1}(-a)^s \cdot d_{j,m,n}.
\end{align*}
As $(-1)^{s-1}(-a)^s$ is an element in a finite field, if $a\neq 0$, the sequence $(d_{j,m,n})_{m\geq n}$ is periodic. If $a=0$, as 
$c_{j,n,n}\neq 0$, the sequence $(d_{j,m,n})_{m\geq n}$ is always $0$, therefore also periodic.
\end{proof}

For an ultimately periodic sequence $(a_n)_n$ we define $IN((a_n)_n)$ to be the index of the 
earlist term from which the sequence
is periodic.
That is, $$IN((a_n)_n)=\min\limits_{i}\{(a_n)_{n\geq i} \mbox{ is periodic}\}.$$

The idea of the proof of Proposition \ref{key} is that the sequences  $(d_{j,m,n})_{m\geq 1}$ are ultimately periodic and the
$IN((d_{j,m,n})_{m\geq 1})$ increases with $n$ for $n\leq n_0:=\sum\limits_{i=0}^{j-1}q^i$. 
For $n>n_0$, the sequence $(d_{j,m,n})_{m\geq 1}$ is zero. 
We have $u\left([1^m0^j]_q\right)=\sum\limits_{n= 1}^{n_0} d_{j,m,n}$ 
and $IN\left(\left(u\left([1^m0^j]_q\right)\right)_{m\geq 1}\right)$ is not far from 
the $IN\left((d_{j,m, n_0})_{m\geq 0}\right)$. In order to justify the last point, 
we need to take a closer look at the table of $d_{2,m,n}$, which according to Lemma \ref{L:end} occurs at the end of 
the table of $d_{j,m,n}$ for $j\geq 3$. 

From the proof of Lemma \ref{periodic} and the definition of $b_{j,m,n}$ and $d_{j,m,n}$ it is easy to give an explicit 
expression of $d_{2,m,n}$:

\begin{lem}\label{L:j=2}
For $n=1$, $$d_{2,m,n}=\binom{s}{s-1}(-a)^{(s-1)+(m+1-n)\cdot s}(-1)^{(s-1)(m-n)}(-1)^{n-1}.$$ \\
For $2 \leq n\leq \bar{s}$, $d_{2,m,n}=0$ for all $m\in\mathbb{N}^*$. \\
For $\bar{s}+1\leq n \leq q$,
$$d_{2,m,n}=\begin{cases} 0&\mbox{if } m<n-1\\
                         \binom{s}{n-1-\bar{s}}(-a)^{(n-1-\bar{s})+(m+1-n)\cdot s} (-1)^{(s-1)(m-n)}(-1)^{n-1} & \mbox{if } m\geq n-1
            \end{cases}$$            
For $n=q+1$,            
$$d_{2,m,n}=\begin{cases} 0&\mbox{if } m<n-2\\
                         (-a)^{(m+2-n)\cdot s} (-1)^{(s-1)(m-n)}(-1)^{n-1} & \mbox{if } m\geq n-2
            \end{cases}$$
For $n> q+1$, $d_{2,m,n}=0$ for all $m\in\mathbb{N}^*$ 
 
\end{lem}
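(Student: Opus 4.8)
The plan is to read $d_{2,m,n}$ directly off its definition once the triple $b_{2,m,n}$, $k_{2,m,n}$, $c_{2,m,n}$ is known, evaluating the binomial coefficient by Lucas' theorem (Theorem~\ref{lucas}) in base $q$ and the power $(-a)^{c_{2,m,n}}$ via the relation $x^{q}=x$ in $\mathbb F_q$ (so a power of $(-a)$ depends only on the base-$q$ digit sum of the exponent). The backbone is the recursion proved inside Lemma~\ref{periodic}: for $1\le n\le q+1$ and $m\ge n$ one has $d_{2,m+1,n}=(-1)^{s-1}(-a)^{s}d_{2,m,n}$, hence $d_{2,m,n}=\big((-1)^{s-1}(-a)^{s}\big)^{m-n}d_{2,n,n}$. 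So it suffices to compute the diagonal terms $d_{2,n,n}$ and a few boundary terms.

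For the diagonal, iterating the statement $R$ of Lemma~\ref{b} from $n$ down to $1$ gives $b_{2,n,n}=b_{2,1,1}+(n-1)=[10^{2}]_q+(n-1)=q^{2}+n-1$ for $1\le n\le q+1$. Since $1<s<q$ forces $1\le\bar s\le q-2$, we have $[\bar s^{2}]_q=\bar s(q+1)<q^{2}\le q^{2}+n-1\le q^{2}+q=[1^{2}0]_q$, so $k_{2,n,n}=2$ and $c_{2,n,n}=q^{2}+n-1-[\bar s^{2}]_q=sq+(n-1-\bar s)$. For $1\le n\le\bar s$ this has base-$q$ digits $(s-1,\,s+n-1)$ (with $s\le s+n-1\le q-1$), so Lucas gives $\binom{[s^{2}]_q}{c_{2,n,n}}=\binom{s}{s-1}\binom{s}{s+n-1}$, which is $\binom{s}{s-1}$ if $n=1$ and $0$ if $2\le n\le\bar s$ (as then $s+n-1>s$); for $\bar s+1\le n\le q+1$ the digits are $(s,\,n-1-\bar s)$, giving $\binom{[s^{2}]_q}{c_{2,n,n}}=\binom{s}{n-1-\bar s}$ (equal to $1$ when $n=q+1$). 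Moreover the sign $(-1)^{k_{2,n,n}(s-1)}(-1)^{n-1}$ collapses to $(-1)^{n-1}$, and $(-a)^{c_{2,n,n}}$ has exponent $\equiv s+(n-1-\bar s)\pmod{q-1}$. Multiplying $d_{2,n,n}$ by $\big((-1)^{s-1}(-a)^{s}\big)^{m-n}=(-1)^{(s-1)(m-n)}(-a)^{s(m-n)}$ and collecting exponents then produces, for all $m\ge n$, the stated formulas in the ranges $n=1$, $\bar s+1\le n\le q$ and $n=q+1$, and the vanishing $d_{2,m,n}=0$ for $m\ge n$ when $2\le n\le\bar s$.

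For the boundary terms, the statement $P$ of Lemma~\ref{b} gives $b_{2,n,n}=b_{2,n-1,n}+q^{2}$, hence $b_{2,n-1,n}=n-1$, and likewise $b_{2,q,q+1}=q$; finally $b_{2,q-1,q+1}=0$ with $k_{2,q-1,q+1}=0$, which can be read off from the identity $[1^{q-1}0^{2}]_q=\sum_{j=1}^{q}(q^{j}-1)$. When $2\le n\le\bar s$, $0<n-1<\bar s\le[\bar s^{k}]_q$ for every $k\ge1$, so $b_{2,n-1,n}$ lies in no interval $[[\bar s^{k}]_q,[1^{k}0]_q]$; thus $k_{2,n-1,n}$ is undefined and $d_{2,n-1,n}=0$. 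When $\bar s+1\le n\le q$, $b_{2,n-1,n}=n-1\in\{[\bar s]_q,\dots,[10]_q\}$, so $k_{2,n-1,n}=1$, $c_{2,n-1,n}=n-1-\bar s$, and substituting into the definition of $d$ gives the stated formula at $m=n-1$. For $n=q+1$: the values $b=q=[10]_q$ ($k=1$, $c=s$) at $m=q$ and $b=0=[1^{0}0]_q$ ($k=0$, $c=0$, with $[s^{0}]_q=0$ and $\binom{0}{0}=1$) at $m=q-1$ each match the stated formula, for every $a$.

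It remains to prove $d_{2,m,n}=0$ at the pairs $(m,n)$ not yet covered, which I would do by showing there that $b_{2,m,n}$ is undefined. Running the decomposition algorithm on $[1^{m}0^{2}]_q=\sum_{i=2}^{m+1}q^{i}$ and arguing by induction on $n$: as long as the procedure has not stopped and $n\le m+1$, the step producing $b_{2,m,n}$ subtracts $q^{m+3-n}-1$, so $b_{2,m,n}=\sum_{i=2}^{m+2-n}q^{i}+(n-1)$ (in particular $b_{2,m,m+1}=m$), and the ``end of procedure'' test at that step reduces to $n-1>q$. A short explicit check of the steps following the value $b_{2,m,m+1}=m$ (which is reached only when $m\le q$) then shows that $b_{2,m,n}$ is defined precisely for $1\le n\le q$ with $m\ge n-1$, and for $n=q+1$ with $m\ge q-1$, and for no other pairs — exactly the case distinction of the statement — so $d_{2,m,n}=0$ everywhere else. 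This termination analysis, together with keeping the base-$q$ digit bookkeeping for $c_{2,n,n}$ and the degenerate $k=0$ conventions straight, is the only delicate point; once the triples $(b_{2,m,n},k_{2,m,n},c_{2,m,n})$ are known, the formulas are a direct substitution.
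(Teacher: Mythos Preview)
Your argument is correct and follows precisely the route the paper indicates (it only says the formulas are ``easy to give'' from the proof of Lemma~\ref{periodic} and the definitions of $b_{j,m,n}$, $d_{j,m,n}$): you use the recursion $d_{2,m+1,n}=(-1)^{s-1}(-a)^{s}d_{2,m,n}$ to reduce to the diagonal values $d_{2,n,n}$, compute those via $b_{2,n,n}=q^{2}+n-1$ and Lucas' theorem, handle the boundary rows $m=n-1$ (and $m=q-1$ for $n=q+1$) directly, and dispose of the remaining pairs by checking that $b_{2,m,n}$ is undefined there. The termination analysis of the algorithm at the end (showing $b_{2,m,q+2}$ is never defined and that the procedure stops at $b_{2,m,m+1}=m$ when $m<q-1$) is exactly what the paper leaves implicit; your treatment of the degenerate case $k=0$ at $(m,n)=(q-1,q+1)$ via $[1^{q-1}0^{2}]_q=\sum_{j=1}^{q}(q^{j}-1)$ is clean.
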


From the table of $d_{2,m,n}$ and $d_{3,m,n}$ we can see easily that the following Lemma is true. We provide nontheless a 
proof for the sake of completeness.

\begin{lem}\label{L:fin}
 $IN\left( \left(u\left([1^m0^3]_q\right)\right)_m \right)\geq q^{2}$.
\end{lem}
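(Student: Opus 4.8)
The plan is to compute both sequences $\left(u\left([1^m0^3]_q\right)\right)_m$ and the individual summands $(d_{3,m,n})_m$ explicitly enough to locate where periodicity can possibly start. Recall that $u\left([1^m0^3]_q\right)=\sum_{n\in N_{3,m}} d_{3,m,n}$, so the index from which this sum is periodic is bounded below by the largest $n$ for which the summand $(d_{3,m,n})_m$ has not yet ``settled'' and is not cancelled by other summands. By Lemma \ref{L:end} (applied with $j=2$, so $q^j=q^2$), for $n\geq q^2+1$ and $m\geq q^2$ we have $b_{3,m,n}=b_{2,m-q^2+1,n-q^2}$, hence $d_{3,m,n}=d_{2,m-q^2+1,n-q^2}$; so the ``bottom right'' block of the $d_{3,m,n}$ table is a shifted copy of the $d_{2,m,n}$ table, whose entries are given explicitly by Lemma \ref{L:j=2}.

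First I would isolate the summand corresponding to the \emph{last} nonzero column. By Corollary \ref{L:s} the sequence $(d_{3,m,n})_{m\geq n}$ is nonzero only for $n\leq n_0=\sum_{i=0}^{2}q^i=q^2+q+1$. Using Lemma \ref{L:end} together with Lemma \ref{L:j=2}, the entry $d_{3,m,n_0}=d_{2,m-q^2+1,q+1}$, which by the $n=q+1$ case of Lemma \ref{L:j=2} is nonzero precisely for $m-q^2+1\geq (q+1)-2=q-1$, i.e.\ for $m\geq q^2+q-2$, and there it equals a nonzero monomial in $(-a)$ times a sign. So the column $(d_{3,m,n_0})_m$ is $0$ for $m<q^2+q-2$ and nonzero afterwards; in particular it is \emph{not} periodic from any index $\leq q^2$ (it is still identically zero there but becomes nonzero later — more precisely, whatever its eventual period, a sequence that is zero for $m<q^2+q-2$ and nonzero at $m=q^2+q-2$ cannot be periodic starting at any index $<q^2+q-2$ unless it is eventually $0$, which it is not, since by Lemma \ref{periodic} it is genuinely periodic with nonzero value). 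Hence $IN\left((d_{3,m,n_0})_m\right)\geq q^2+q-2 > q^2$.

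Next I would argue that this non-periodicity of the last column is not destroyed by the other summands $d_{3,m,n}$ with $n<n_0$ when we form the sum $u\left([1^m0^3]_q\right)=\sum_n d_{3,m,n}$. The point is that for $n<n_0$, each $(d_{3,m,n})_m$, viewed in the relevant range $m\geq q^2$, is again a shifted $d_{2,\,\cdot\,,\,\cdot}$-column (for $n>q^2$) or can be controlled directly from Lemma \ref{b} and Lemma \ref{periodic} (for $n\leq q^2$), and in each case it is either identically zero on an initial segment that extends past $q^2$, or it is already periodic from an index $\leq q^2$. Concretely, for $n$ in the ``$j=2$ shifted block'' the exponent of $q$ governing when the column turns on is $n-1-\bar s$ or similar (from Lemma \ref{L:j=2}), which is strictly smaller than the threshold $q^2+q-2$ attached to $n_0$; so at $m=q^2+q-2$ the column $n_0$ switches on while no other column switches on or off, and at $m=q^2$ all columns that will ever be nonzero and change behaviour are still in a state (zero, or already periodic) compatible with period starting at most $q^2$ — except column $n_0$. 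Therefore the sum cannot be periodic from any index $\leq q^2$, giving $IN\left(\left(u\left([1^m0^3]_q\right)\right)_m\right)\geq q^2$.

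The main obstacle I anticipate is the bookkeeping in the last step: ensuring that among all columns $n$ with $q^2<n\le n_0$, the ``turn-on thresholds'' coming from Lemma \ref{L:j=2} are distinct and, crucially, that the threshold for $n_0$ is strictly the largest, so that there is some index $m$ just below it at which exactly one column is nonzero-and-isolated and no cancellation can occur; and separately checking that the columns with $n\le q^2$ (which are handled by the periodicity already established in Lemma \ref{periodic}, with $IN$ at most their length of initial zero-segment, bounded via Lemma \ref{b}) all have $IN\le q^2$. Once that combinatorial accounting is pinned down, the conclusion $IN\geq q^2$ is immediate. I would present the turn-on thresholds as a short explicit list read off from Lemma \ref{L:j=2} and Lemma \ref{L:end}, then conclude.
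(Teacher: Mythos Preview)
Your strategy is close to the paper's, but the step you flag as ``bookkeeping'' is precisely where the argument breaks. You plan to check that ``the threshold for $n_0$ is strictly the largest'' among the turn-on thresholds coming from Lemma~\ref{L:j=2}. That claim is false. In the $j=2$ block, column $n=q+1$ turns on at $m=q-1$ (threshold $n-2$) while column $n=q$ also turns on at $m=q-1$ (threshold $n-1$); translated via Lemma~\ref{L:end}, columns $n_0$ and $n_0-1$ of the $j=3$ table switch on at the \emph{same} index $m=q^2+q-2$. So there is no index at which only the last column has just switched on.

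Worse, these two coinciding columns can genuinely cancel. A direct computation from Lemma~\ref{L:j=2} gives, for $m\ge q-1$,
\[
d_{2,m,q}+d_{2,m,q+1}=(1-s\,a^{s-1})\,(-a)^{(m+1-q)s}\,(-1)^{(s-1)(m-q-1)}(-1)^{q},
\]
so when $1-s\,a^{s-1}=0$ in $\mathbb{F}_q$ the sum of the last two columns vanishes identically and your isolation argument collapses. The paper's proof is organised around exactly this obstruction: it splits into the case $1-s\,a^{s-1}\neq0$ (where the last two columns together give $IN\ge q-1$ at level $j=2$), the case $1-s\,a^{s-1}=0$ with $q\ge4$ (where one falls back to column $n=q-1$, whose threshold $q-2\ge2$ is now strictly maximal among the surviving columns), and finally $1-s\,a^{s-1}=0$ with $q=3$. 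In that last case every column of the $j=2$ block cancels in pairs (so $u([1^m0^2]_q)\equiv0$), and one must abandon the shifted $j=2$ block entirely and analyse the first $q^2$ columns of the $j=3$ table directly to locate the non-periodicity. None of this is captured by the ``short explicit list of thresholds'' you envisage; the case split is essential, not cosmetic.

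A smaller issue: your assertion that the last column is ``genuinely periodic with nonzero value'' by Lemma~\ref{periodic} fails when $a=0$; the proof of that lemma explicitly notes that $(d_{j,m,n})_{m\ge n}$ is identically zero in that case. Your conclusion about the $IN$ of the last column happens to survive (the single nonzero entry at $m=q^2+q-2$ comes from the $(-a)^0$ term), but the reasoning needs adjustment there too.
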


\begin{proof}
We divide the argument into two cases. 

Case 1: $1-s\cdot a^{s-1}\neq 0$ or $q>3$.

In this case we prove that $IN\left( \left(u\left([1^m0^2]_q\right)\right)_m \right)\geq 2.$ Thus by Lemma \ref{L:end} and Lemma
\ref{L:j=2} we have $IN\left( \left(u\left([1^m0^3]_q\right)\right)_m \right)\geq q^{2}$.

Case 1.1: If $a=0$, then from Lemma \ref{L:j=2} we know that 
$$\max\limits_{m}\{\exists n\in\mathbb{N}^* \mbox{ s. t. } d_{2,m,n}\neq 0\}=q-1.$$ 
Therefore  $IN\left( \left(u\left([1^m0^2]_q\right)\right)_m \right)= q-1$.

Case 1.2
If $a\neq 0$ and $1-s\cdot a^{s-1}\neq 0$, we rewrite the expressions in Lemma \ref{L:j=2} for $n=q$ and $n=q+1$ as
\begin{align*}
d_{2,m,q}=&\begin{cases} 0&\mbox{if } m<q-1\\
                         s\cdot (-a)^{(s-1)+(m+1-q)\cdot s} (-1)^{(s-1)(m-q)}(-1)^{q-1} & \mbox{if } m\geq q-1.\\
            \end{cases}\\
d_{2,m,q+1}=&\begin{cases} 0 &\mbox{if } m<q-1\\
                         (-a)^{(m+1-q)\cdot s} (-1)^{(s-1)(m-q-1)}(-1)^{q} & \mbox{if } m\geq q-1.
            \end{cases}
\end{align*}
Therefore
$$d_{2,m,q}+d_{2,m,q+1}=\begin{cases} 0 &\mbox{if } m<q-1\\
                         (1-s\cdot a^{s-1})(-a)^{(m+1-q)\cdot s} (-1)^{(s-1)(m-q-1)}(-1)^{q} & \mbox{if } m\geq q-1.
            \end{cases}$$
Since $1-s\cdot a^{s-1}\neq 0$, $IN((d_{2,m,q}+d_{2,m,q+1})_m)=q-1$. By Lemma \ref{L:j=2}, $IN((d_{2,m,n})_m)\leq q-2$
for all $1\leq n\leq q-1$. Therefore  $IN\left( \left(u\left([1^m0^2]_q\right)\right)_m \right)=q-1\geq 2$.

Case 1.3
If $1-s\cdot a^{s-1}= 0$ and $q\geq 4$, $IN((d_{2,m,q}+d_{2,m,q+1})_m)=1$. Since $s>1$, $\bar{s}+1\leq q-1$. Therefore by
Lemma \ref{L:j=2} we have $IN(d_{2,m,q-1})=q-2>1$ since $q\geq 4$. 
By Lemma \ref{L:j=2}, for $1\leq n<q-1$, $IN(d_{2,m,n})\geq \max\{1,...,q-3\}$. 
Therefore $IN\left( \left(u\left([1^m0^2]_q\right)\right)_m \right)=q-2\geq 2$.

Case 2: If $1-s\cdot a^{s-1}=0$ and $q=3$, using the formula in Lemma \ref{L:j=2} we find that
$d_{2,m,n}=0$ for $m,n\in\mathbb{N}^*$.
So we look at the table of $d_{3,m,n}$. With similar calculation we find for $n\leq q^2$, the sequences $(d_{3,m,n})_m$ 
are periodic from $m=q^2-1$. But the sum of the last four columns ($n=q^2+1,...,q^2+4$) is only periodic from $m=q^2$. 
Therefore $IN\left( \left(u\left([1^m0^3]_q\right)\right)_m \right)=q^2$.

\end{proof}

\begin{proof}[Proof of Proposition \ref{key}]
We prove by induction on $j$ that $IN\left( \left(u\left([1^m0^j]_q\right)\right)_m \right)\geq q^{j-1}$ for $j\geq 3$. 
By Lemma \ref{L:fin} we know that $IN\left( \left(u\left([1^m0^3]_q\right)\right)_m \right)\geq q^{2}$. 

Suppose that for we have proven for $j$ that $IN\left( \left(u\left([1^m0^j]_q\right)\right)_m \right)\geq q^{j-1}$.
We define $n_0:=\sum\limits_{i=0}^{j}q^i$ and $n_1=\sum\limits_{i=0}^{j-1}q^i$. Then 
$$u\left([1^m0^{j+1}]_q\right)=\sum\limits_{n=1}^{n_0}d_{j+1,m,n}=\sum\limits_{n=1}^{q^j}d_{j+1,m,n}+
\sum\limits_{n=1}^{n_1}d_{j+1,m,n+q^j}.$$
By Lemma \ref{periodic} we know that $IN(\sum\limits_{n=1}^{q^j}d_{j+1,m,n})\leq q^j$.
By Lemma \ref{L:end} we know that 
$$\sum\limits_{n=1}^{n_1}d_{j+1,m+q^j-1,n+q^j}=\sum\limits_{n=1}^{n_1}d_{j,m,n}\cdot(-1)^q=u\left([1^m0^j]_q\right)\cdot(-1)^q.$$
 By the hypothesis of induction we have
 $$IN((\sum\limits_{n=1}^{n_1}d_{j+1,m,n+q^j})_m)= q^j-1+IN((u([1^m0^j]_q))_m)\geq q^j+2.$$
When we have two ultimately periodic sequences $u$ and $v$ such that $IN(u)>IN(v)$, we have $IN(u+v)=IN(u)$. Therefore
$$IN(u\left([1^m0^{j+1}]_q\right))=IN(\sum\limits_{n=1}^{n_1}d_{j+1,m,n+q^j})\geq q^j+2.$$
This completes the proof.
\end{proof}

\end{document}